
\documentclass[12pt]{article}
\usepackage{amsfonts}
\usepackage{amsmath}

\setcounter{MaxMatrixCols}{10}

\newtheorem{theorem}{Theorem}

\newtheorem{lemma}[theorem]{Lemma}

\newtheorem{proposition}[theorem]{Proposition}
\newtheorem{remark}[theorem]{Remark}

\newenvironment{proof}[1][Proof]{\noindent\textbf{#1.} }{\ \rule{0.5em}{0.5em}}
\newdimen\dummy
\dummy=\oddsidemargin
\addtolength{\dummy}{72pt}
\marginparwidth=.5\dummy
\marginparsep=.1\dummy
\begin{document}

\title{Limits of quotients of real analytic functions in two variables}
\author{ Carlos A. Cadavid*, Sergio Molina**, Juan D. V\'{e}lez**, \\
\small {* Corresponding Author, Universidad EAFIT,}\\
\small{Departamento de Ciencias B\'{a}sicas,}\\
\small{Bloque 38, Office 417. Carrera 49 No. 7 Sur -50,}\\
\small{Medell\'in, Colombia, ccadavid@eafit.edu.co, (57) (4)-2619500. Fax (57) (4)-3120649.} \\
\small {** Universidad Nacional de Colombia, Departamento de Matem\'{a}ticas,}\\
\small {Calle 59A No 63 - 20 , Oficina 43-106, Medell\'{\i}n, Colombia,}\\
\small{sdmolina@gmail.com, jdvelez@unal.edu.co} }

\maketitle

\begin{abstract}
Necessary and sufficient conditions for the existence of limits of the form 
\begin{equation*}
\lim_{(x,y)\rightarrow (a,b)}\frac{f(x,y)}{g(x,y)}
\end{equation*}
are given, under the hipothesis that $f$ and $g$ are real analytic functions
near the point $(a,b)$, and $g$ has an isolated zero at $(a,b)$. An
algorithm (implemented in MAPLE 12) is also provided. This algorithm
determines the existence of the limit, and computes it in case it exists. It
is shown to be more powerful than the one found in the latest versions of
MAPLE. The main tools used throughout are Hensel's Lemma and the theory of
Puiseux series.
\end{abstract}

\textbf{Keywords:} Limit, Real Analytic Function, Hensel's Lemma, Puiseaux Series

\section{Introduction}

In the usual calculus courses one is asked to determine the existence of
limits of the form 
\begin{equation*}
\lim_{(x,y)\rightarrow (a,b)}\frac{f(x,y)}{g(x,y)}
\end{equation*}%
where $f$ and $g$ are real analytic functions (typically, polynomials or
trigonometric and exponential functions) defined in an open disk centered at
a point $(a,b)$ in $\mathbb{R}^{2}$. The standard strategy for solving this
problem consists in studying the existence of the limit along various simple
trajectories, such as straight lines, quadrics, cubics, etc., with the hope
that either one of them fails to exist or two of them differ. If they all
coincide, then one tries some other ad hoc trajectories. If all that fails,
one tries to prove its existence by some theoretical methods.

In this paper we develop a theoretical method which completely solves this
problem. An algorithm for polynomials based on this method is implemented,
which proves to be more powerful than other existing routines. 

An application of Weierstrass' Preparation Theorem allows to reduce the
problem to the case where $f$ and $g$ are monic polynomial functions in the
variable $y,$ whose coefficients are real series in the variable $x.$ Next,
a \textit{discriminant real curve} is constructed using Lagrange Multipliers
with the property that the limit exists, if and only if, it exists along
this curve. Then Hensel's lemma, some Galois theory, and the theory of
Puiseaux series are used to parametrize the various branches of the
discriminant curve and select the \textit{real ones. }All these steps are
done in a constructive manner making it possible to implement this method in
an algorithmic way. In this article an algorithm was implemented for
polynomial functions.

\section{Theory}

\subsection{Reduction to the case where $f$ and $g$ are polynomials}

After a translation, we may assume that $(a,b)$ is the origin. 

Let us denote by $S=\mathbb{R}\left\{ x,y\right\} $ the ring of power series
in the variables $x,$ $y$ with real coefficients having positive radius of
convergence around the origin. If $h(x,y)$ belongs to $S$, the order of $h$
in the variable $y$ is defined to be the smallest integer $r$ such that $%
\overline{h}(y)=h(0,y)$ has the form

\begin{equation*}
\overline{h}(y)=\alpha _{r}y^{r}+\alpha _{r+1}y^{r+1}+\cdots ,\text{ with }%
\alpha _{r}\neq 0.
\end{equation*}%
(If $\overline{h}(y)=0$ the order is defined to be $+\infty .$)

It is not difficult to show that given $h_{1},\ldots ,h_{n}$ in $S-\{0\}$
there exists an integer $v\geq 1$ such that after a change of coordinates of
the form $x^{\prime }=x+y^{v},y^{\prime }=y$, each series $h_{i}^{\prime
}(x^{\prime },y^{\prime })=h_{i}(x+y^{v},y)$ is of finite order in the
variable $y^{\prime }$ \cite{Greuel}. The essential tool for the reduction
is the following lemma.

\begin{lemma}[Weierstrass]
Let $h$ be an element of $S=\mathbb{R}\left\{ x,y\right\} $ of order $d$ in $%
y.$ Then there exists a unique unit $u(x,y)\in S$ and unique real series $%
a_{1}(x),\ldots ,a_{d}(x)$ with positive radii of convergence such that $%
h(x,y)=u(x,y)(y^{d}+a_{1}(x)y^{d-1}+\cdots +a_{d}(x))$ \cite{Greuel}.
\end{lemma}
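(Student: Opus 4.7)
The plan is to derive Weierstrass Preparation from the more analytic viewpoint of roots of $h(x,\cdot)$ as functions of $x$, with convergence established through complex analysis. Since $h(0,y)$ has an isolated zero of order exactly $d$ at the origin, I would begin by complexifying: extending $h$ to a holomorphic function on a polydisk $\{|x|<\epsilon_1\}\times\{|y|<\epsilon_2\}\subset\mathbb{C}^2$, where $\epsilon_2$ is chosen small enough that $y=0$ is the only zero of $h(0,y)$ in $|y|\leq\epsilon_2$, and then shrinking $\epsilon_1$ so that $h(x,y)\neq 0$ on the distinguished circle $|y|=\epsilon_2$ for every $|x|<\epsilon_1$.

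Next, I would invoke the argument principle: the number of zeros of $h(x,\cdot)$ inside $|y|<\epsilon_2$, counted with multiplicity, is a continuous integer-valued function of $x$ and hence constantly equal to $d$. Let $\beta_1(x),\ldots,\beta_d(x)$ denote these zeros. Individually they are at best continuous in $x$, but their symmetric functions are holomorphic, since Cauchy's formula gives
\begin{equation*}
p_k(x)=\sum_{j=1}^d \beta_j(x)^k=\frac{1}{2\pi i}\oint_{|y|=\epsilon_2}y^k\,\frac{\partial h/\partial y}{h(x,y)}\,dy,
\end{equation*}
which is manifestly holomorphic in $x$. By Newton's identities, the elementary symmetric functions $e_k(\beta_1,\ldots,\beta_d)$ are polynomials in the $p_k$, so setting $a_k(x)=(-1)^k e_k(\beta_1(x),\ldots,\beta_d(x))$ yields holomorphic coefficients, and they take real values for real $x$ because then the zero set of $h(x,\cdot)$ is stable under complex conjugation.

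Now define $P(x,y)=y^d+a_1(x)y^{d-1}+\cdots+a_d(x)=\prod_{j=1}^d(y-\beta_j(x))$ and $u(x,y)=h(x,y)/P(x,y)$. A priori $u$ is defined only off the zero locus of $P$, but since $h(x,\cdot)$ and $P(x,\cdot)$ share the same zeros with the same multiplicities on each slice $|y|<\epsilon_2$, the slicewise quotient is holomorphic in $y$, and Cauchy's integral formula on the distinguished circle $|y|=\epsilon_2$ (where both $h$ and $P$ are nonvanishing) extends $u$ to a holomorphic function on the full polydisk by differentiation under the integral sign. Moreover $u(0,0)=\alpha_d\neq 0$, since $P(0,y)=y^d$ and $h(0,y)/y^d=\alpha_d+\alpha_{d+1}y+\cdots$, so $u$ is a unit. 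For \emph{uniqueness}, if $h=uP=u'P'$ are two such decompositions, then both $P(x,\cdot)$ and $P'(x,\cdot)$ are the unique monic polynomial of degree $d$ in $y$ whose roots in $|y|<\epsilon_2$ are exactly the zeros of $h(x,\cdot)$ with the correct multiplicities, which forces $P=P'$ and hence $u=u'$.

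The main obstacle I anticipate is the analytic content: producing coefficients $a_k(x)$ with genuinely positive radius of convergence, together with a nonvanishing $u$. A purely formal existence in $\mathbb{R}[[x]][[y]]$ can be obtained inductively by comparing coefficients, but the quantitative statement about convergence is what forces the passage to complex analysis and the careful choice of the bidisk above. An alternative, more algebraic route would be a Banach fixed-point argument for the Weierstrass Division Theorem in a suitably normed space of convergent power series, from which Preparation follows by dividing $y^d$ by $h$; however, the complex-analytic method outlined here is cleaner and transparently reveals the geometric picture of $y=\beta_j(x)$ as a $d$-sheeted branched covering over a small disk in the $x$-plane.
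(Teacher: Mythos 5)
Your proposal is correct. Note that the paper does not actually prove this lemma: it is quoted verbatim from the reference \cite{Greuel} (Greuel--Lossen--Shustin) and used as a black box to reduce $f$ and $g$ to monic polynomials in $y$, so there is no in-paper argument to compare against. What you give is the classical complex-analytic proof of Weierstrass preparation: complexify, fix a bidisk on whose distinguished boundary $h$ is nonvanishing, count the $d$ zeros of $h(x,\cdot)$ by the argument principle, recover the coefficients $a_k(x)$ as holomorphic functions via the residue formulas for the power sums $p_k$ and Newton's identities, and extend $u=h/P$ across the zero locus by the Cauchy integral on $|w|=\epsilon_2$. All the key points are in place, including the two that are easiest to fumble: reality of the $a_k$ (conjugation symmetry of the zero set for real $x$) and the nonvanishing $u(0,0)=\alpha_d$. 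Your uniqueness argument implicitly uses that any competing monic factor $P'$ satisfies $P'(0,y)=y^d$, so its roots stay inside $|y|<\epsilon_2$ for small $x$ and must coincide, with multiplicity, with the zeros of $h(x,\cdot)$; it would be worth stating that one line explicitly, but it is forced by comparing orders at $x=0$. Compared with the formal-algebraic route (Weierstrass division by successive approximation, which is essentially what \cite{Greuel} does), your approach buys convergence of the $a_k$ and of $u$ with no extra estimates, at the cost of leaving the purely algebraic category; either is acceptable here since $S=\mathbb{R}\{x,y\}$ is a ring of convergent series.
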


Since the existence of the limit and its value is obviously independent of
the particular choice of local coordinates, we may assume that $%
f(x,y)=u(x,y)f_{1}(x,y)$ and $g(x,y)=v(x,y)g_{1}(x,y)$, where $u\left(
x,y\right) $ and $v(x,y)$ are units and 
\begin{eqnarray*}
f_{1} &=&y^{d}+a_{1}(x)y^{d-1}+\cdots +a_{d}(x) \\
g_{1} &=&y^{b}+c_{1}(x)y^{b-1}+\cdots +c_{b}(x)
\end{eqnarray*}%
are monic polynomials in $\mathbb{R}\left\{ x\right\} \left[ y\right] .$
Since units do not affect the existence of the limit, there is no loss of
generality in assuming also that $u$ and $v$ are equal to $1.$

\subsection{Discriminant variety for the limit}

The following proposition provides a necessary and sufficient condition for
the existence of the limit.

Let $f=y^{d}+a_{1}(x)y^{d-1}+\cdots +a_{d}(x)$ and $g=y^{b}+c_{1}(x)y^{b-1}+%
\cdots +c_{b}(x)$ be monic polynomials in $\mathbb{R}\left\{ x\right\} \left[
y\right] ,$ and $D=D_{\rho }(0)\subset \mathbb{R}^{2}$ a closed disk
centered at the origin with radius $\rho >0$, such that each $a_{i}(x),$ $%
c_{j}(x)$ is convergent in $D.$ Let us denote by $h^{\prime }$ the
polynomial $y\partial q/\partial x-x\partial q/\partial y,$ where $q$
denotes the quotient $q=f/g,$ and by $h^{\prime \prime }$ (the numerator of $%
h^{\prime }$) the polynomial 
\begin{equation}
h^{\prime \prime }=y\left( g\frac{\partial f}{\partial x}-f\frac{\partial g}{%
\partial x}\right) -x\left( g\frac{\partial f}{\partial y}-f\frac{\partial g%
}{\partial y}\right) .  \label{1}
\end{equation}%
Let $X$ be the variety cut by $h^{\prime }$ in the puncture disk, i.e., 
\begin{equation*}
X=\left\{ (x,y)\in D:(x,y)\neq (0,0)\text{ and }h^{\prime \prime
}(x,y)=0\right\} .
\end{equation*}%
With this notation we have the following proposition.

\begin{proposition}
Let $q(x,y)=f(x,y)/g(x,y)$. The limit 
\begin{equation*}
\lim_{(x,y)\rightarrow (a,b)}q(x,y)
\end{equation*}%
exists and equals $L\in \mathbb{R}$, if and only if for every $\epsilon >0$
there is $0<\delta <\rho $ such that for every $(x,y)\in X\cap D_{\delta },$
the inequality $\left\vert q(x,y)-L\right\vert <\epsilon $ holds.
\end{proposition}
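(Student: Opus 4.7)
The necessity direction is immediate: $X\cap D_\delta$ is contained in the punctured disk $D_\delta\setminus\{(0,0)\}$, so if $\lim_{(x,y)\to(0,0)}q(x,y)=L$ in the usual sense, then for each $\epsilon>0$ the standard $\delta$ also witnesses $|q-L|<\epsilon$ on the subset $X\cap D_\delta$. The substance lies in the reverse implication, which I would prove via a Lagrange-multiplier argument on concentric circles around the origin.

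The geometric point is that $h''$ detects exactly the critical points of $q$ restricted to circles about the origin. Since $g$ has an isolated zero at the origin, after shrinking $\rho$ I may assume $g$ is nonvanishing on $D_\rho\setminus\{(0,0)\}$. A direct computation with $q=f/g$ gives $h''=g^2\,h'$ where $h'=y\,q_x-x\,q_y$, so on this punctured disk $X$ coincides with the zero set of $h'$, namely the locus where $\nabla q$ is parallel to the radial vector $(x,y)$. Parametrizing $C_r=\{x^2+y^2=r^2\}$ by $\theta\mapsto(r\cos\theta,r\sin\theta)$ and differentiating yields
\[
\frac{d}{d\theta}\,q(r\cos\theta,r\sin\theta)\;=\;-y\,q_x+x\,q_y\;=\;-h'(x,y),
\]
so the critical points of $q|_{C_r}$ are exactly the points of $X\cap C_r$.

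For sufficiency, fix $\epsilon>0$, let $\delta\in(0,\rho)$ be supplied by the hypothesis, take any $p=(x_0,y_0)\in D_\delta\setminus\{(0,0)\}$, and set $r=\|p\|$. The circle $C_r$ is compact and contained in the punctured disk, where $q$ is continuous, so $q|_{C_r}$ attains its maximum $M$ and its minimum $m$; by the observation above, both extrema lie in $X\cap C_r\subseteq X\cap D_\delta$, whence $|M-L|<\epsilon$ and $|m-L|<\epsilon$ by hypothesis. Since $m\leq q(p)\leq M$, this yields $|q(p)-L|<\epsilon$, proving $\lim q=L$. The main potential obstacle is the preliminary reduction ensuring that $g$ is nowhere zero on the punctured disk: without it $q$ fails to be continuous on every $C_r$, and the identification $X=\{h'=0\}$ breaks down. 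This is exactly where the isolated-zero hypothesis on $g$ enters; the rest of the proof is textbook Lagrange multipliers plus the compactness of circles.
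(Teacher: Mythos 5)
Your proof is correct and follows essentially the same route as the paper: both arguments observe that the extrema of $q$ restricted to each circle $C_r(0)$ must occur on $X$ (the Lagrange-multiplier/critical-point locus $y\,q_x-x\,q_y=0$), and then squeeze $q(p)$ between the circle's minimum and maximum, both of which lie in $X\cap D_\delta$. Your version is in fact slightly more careful than the paper's, since you make explicit the identity $h''=g^2h'$ and the role of the isolated-zero hypothesis in guaranteeing that $X=\{h'=0\}$ on the punctured disk and that $q$ is continuous on each circle.
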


\begin{proof}
The method of Lagrange multipliers applied to the function $q(x,y),$ subject
to the condition $x^{2}+y^{2}=r^{2}$ where $0<r<\rho ,$ says that the
extreme values taken by $q(x,y)$ on each circle $C_{r}(0),$ centered at the
origin and having radius $r,$ occur among those points $(x,y)$ of $C_{r}(0)$
for which the vectors $\left( \partial q/\partial x,\partial q/\partial
y\right) $ and $(x,y)$ are parallel, which amounts to $y\partial q/\partial
x-x\partial q/\partial y=0.$ Let us assume that given $\epsilon >0$ there
exists $0<\delta <\rho $ such that for every $(x,y)\in X\cap D_{\delta }$
the inequality $\left\vert q(x,y)-L\right\vert <\epsilon $ holds. Let $%
(x,y)\in D_{\delta }$ and $r=\sqrt{x^{2}+y^{2}}.$ If $t_{1}(r),t_{2}(r)\in
C_{r}(0)$ are such that $q(t_{1}(r))=\min_{t\in C_{r}(0)}q(t),$ and $%
q(t_{2}(r))=\max_{t\in C_{r}(0)}q(t),$ then 
\begin{equation*}
q(t_{1}(r))-L\leq q(x,y)-L\leq q(t_{2}(r))-L,
\end{equation*}%
for every $(x,y)\in C_{r}(0).$ Since $t_{1}(r),t_{2}(r)\in X\cap D_{\delta }$
we have 
\begin{equation*}
-\epsilon <q(t_{1}(r))-L\text{ \ and }q(t_{2}(r))-L<\epsilon \text{\ }
\end{equation*}%
and therefore $\left\vert q(x,y)-L\right\vert <\epsilon ,$ for all $(x,y)\in
D_{\delta }.$

The "only if" part is immediate from the definition of limit.
\end{proof}

\subsection{Hensel's Lemma}

Let us fix an integer $n\geq 0.$ A linear change of coordinates of the form $%
f_{1}(x,y)=f(x+ny,-nx+y),$ and $g_{1}(x,y)=g(x+ny,-nx+y)$ does not alter the
limit of the quotient as $(x,y)$ approaches the origin. It is easy to see
that this change of coordinate transforms (\ref{1}) into a monic polynomial
multiplied by a nonzero constant. By the chain rule we have that $h^{\prime
\prime }(x+ny,-nx+y)$ is equal to 
\begin{equation*}
(-nx+y)[g_{1}(\partial f/\partial x)_{1}-f_{1}(\partial g/\partial
x)_{1}]-(x+ny)[g_{1}(\partial f/\partial y)_{1}-f_{1}(\partial g/\partial
y)_{1}],
\end{equation*}%
where $(\partial f/\partial x)_{1}(x,y)=\partial f/\partial x(x+ny,-nx+y),$
and similarly with $(\partial f/\partial y)_{1},$ $(\partial g/\partial
x)_{1},(\partial g/\partial y)_{1}.$ We will denote $h^{\prime \prime
}(x+ny,-nx+y)$ simply by $h(x,y).$

Our next goal is to parameterize the curve $h(x,y)=0.$ For this purpose we
will use Hensel's Lemma (\cite{Eisenbud}). Let us denote by $k$ a arbitrary
field, by $R$ the ring of formal power series in the variable $x,$ with
coefficients in $k,$ $R=k[[x]]$, and by $k((x))$ its field of fractions. $R$
is a local ring $(R,m)$ whose maximal ideal is $m=(x).$ For each $h(x,y)\in
R[y]$ monic in the variable $y,$ let us denote by $\overline{h}$ its
reduction modulo $m$, i.e., $\overline{h}=h(0,y).$

\begin{lemma}[Hensel's Lemma]
Let $F(x,y)$ be an element of $R[y]$ monic in $y,$ and let us assume that $%
\overline{F}=gh$ is a factorization in $k[y]$ whose factors are relatively
prime, and of degrees $r$ and $s.$ Then there exist unique $G$ and $H$ in $%
R[y]$ with degrees $r$ and $s,$ respectively, such that:

\begin{enumerate}
\item $\overline{G}=g$ and $\overline{H}=h$

\item $F=GH$
\end{enumerate}
\end{lemma}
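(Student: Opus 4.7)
The plan is to construct $G$ and $H$ by successive approximation modulo powers of the maximal ideal $m=(x)$, lifting the initial factorization $\overline{F}=gh$ one power of $x$ at a time. Starting from $G_0 = g$ and $H_0 = h$, I would inductively produce polynomials $G_n, H_n \in R[y]$, monic in $y$ of degrees $r$ and $s$ respectively, satisfying
\begin{equation*}
F \equiv G_n H_n \pmod{x^{n+1}}, \qquad \overline{G_n} = g, \qquad \overline{H_n} = h.
\end{equation*}
Because the coefficients of $G_n$ and $H_n$ (as elements of $k[x]$) stabilize modulo $x^{n+1}$ as $n$ grows, in the $m$-adic limit they assemble into formal power series in $x$, producing the desired $G,H \in R[y]$.

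For the inductive step, assume $G_n, H_n$ are in hand and write $F - G_n H_n = x^{n+1} w_n$ with $w_n \in R[y]$. Setting $G_{n+1} = G_n + x^{n+1} u$ and $H_{n+1} = H_n + x^{n+1} v$ for polynomials $u,v \in k[y]$ to be determined, and discarding terms of order $x^{n+2}$, the requirement $F \equiv G_{n+1}H_{n+1} \pmod{x^{n+2}}$ collapses to the Bezout-type equation
\begin{equation*}
\overline{w_n} \;=\; u\cdot h \;+\; v\cdot g \qquad \text{in } k[y].
\end{equation*}
Since $\gcd(g,h)=1$ in the PID $k[y]$, such a decomposition exists. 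To enforce the degree restrictions I would reduce $u$ modulo $g$ via the division algorithm to secure $\deg u < r$, so that $v$ is then determined by $v g = \overline{w_n} - u h$. The key observation is that $F$ and $G_nH_n$ are both monic of $y$-degree $r+s$, so $\deg \overline{w_n} < r+s$, which forces $\deg v < s$. This guarantees $G_{n+1}$ and $H_{n+1}$ remain monic of degrees $r$ and $s$, closing the induction.

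For uniqueness, suppose $F = GH = G'H'$ are two factorizations meeting the hypotheses. Proceeding by induction on $n$, I would show $G \equiv G' \pmod{x^{n+1}}$ and $H \equiv H' \pmod{x^{n+1}}$. Writing the differences as $G-G' = x^{n+1}U$ and $H-H' = x^{n+1}V$ with $U,V \in R[y]$, the equality $GH = G'H'$ yields $GV + UH' = 0$; reducing modulo $x$ gives $g\overline{V} + h\overline{U} = 0$, and since $\deg U < r = \deg g$ combined with $\gcd(g,h)=1$ forces $\overline{U} = 0$, and symmetrically $\overline{V}=0$. Hence $x$ divides both $U$ and $V$, advancing the induction, and the $m$-adic separation of $R$ yields $G=G'$, $H=H'$.

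The main obstacle I anticipate is precisely the degree bookkeeping at each Bezout step: an unconstrained decomposition could produce corrections $u$ or $v$ of too high $y$-degree, destroying the monicity of $G_n$ or $H_n$ and breaking the inductive hypothesis. The remedy combines two ingredients: reducing $u$ modulo $g$ to enforce $\deg u < r$, and exploiting the monicity of $F$ to bound $\deg \overline{w_n} < r+s$ so that $v$ automatically satisfies $\deg v < s$. The coprimality hypothesis on $g$ and $h$ is indispensable from the very first lifting, since without it the Bezout equation would generically fail to admit a solution, and the entire lifting scheme would collapse.
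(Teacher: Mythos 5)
Your proof is correct: it is the standard successive-approximation argument, lifting the factorization one power of $x$ at a time by solving a Bezout equation $uh+vg=\overline{w_n}$ in $k[y]$, with the degree bookkeeping ($\deg u<r$ by reduction mod $g$, $\deg v<s$ forced by monicity of $F$) and the $m$-adic uniqueness induction both handled properly. The paper does not prove this lemma in the text (it cites Eisenbud), but its routine \textit{Hensel} in Section 3 implements exactly your scheme --- extracting the coefficient of $x^{i}$ and solving $g_{0}h_{i}+h_{0}g_{i}=f_{i}-\sum_{j=1}^{i-1}g_{j}h_{i-j}$ via \textit{gcdex} at each order --- so your approach coincides with the one the paper relies on.
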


In order to construct a parameterization of $h(x,y)=0,$ Puiseaux series are
used which we review next.

\subsection{Puiseaux Series}

Let us denote by $L$ the quotient field of fractions of $R=\mathbb{C}[[x]],$
which consists of Laurent series. Let $\overline{L}$ be an algebraic closure
of $L.$ For each positive integer $n$ we will denote by $x^{1/n}$ a fixed $n$%
-th root of $x$ in $\overline{L}.$ It is clear that the $n$-th roots of $x$
are%
\begin{equation*}
\theta x^{1/n},\theta ^{2}x^{1/n},...,\theta ^{n-1}x^{1/n},x^{1/n}
\end{equation*}%
where $\theta $ is any primitive $n$-th root of unity. It is easy to see
that the polynomial $t^{n}-x$ is irreducible in $L[t]$ and therefore $%
L\subset L(x^{1/n})$ is an extension of degree $n$. Consider the directed
system consisting of the positive natural numbers (partially) ordered by
divisibility, i.e., $n\leq m$ if and only if $n|m$. The direct limit $%
\lim_{\longrightarrow \mathbb{N}}L(x^{1/n})$ will be denoted by $L^{\ast }$.
This limit can be identified with the field $\cup _{n}L(x^{1/n})\subset 
\overline{L}$. Each element $\sigma $ of $L^{\ast }$ can therefore be
written in the form $\sigma =\sum c_{k}x^{q_{k}}$, with $c_{k}\in \mathbb{C}$%
, and exponents $q_{k}\in \mathbb{Q}$ such that:

\begin{enumerate}
\item $q_{1}<\cdots <q_{r}<\cdots $

\item There is an integer $b$ so that each exponent can be written as $%
q_{i}=a_{i}/b$, for some integer $a_{i}$.
\end{enumerate}

The least exponent in the expression for $\sigma ,$ $q_{1},$ is called \emph{%
the order of} $\sigma $. It is a well known theorem that given a monic
polynomial 
\begin{equation*}
h(x,y)=y^{d}+h_{1}(x)y^{d-1}+\cdots +h_{d}(x)
\end{equation*}%
in $R[y]$, there is an integer $N>0$ such that $h$ can be factored
completely in $L(x^{1/N})\subset L^{\ast }$ as 
\begin{equation}
h(x,y)=(y-\sigma _{1}(x^{1/N}))\cdots (y-\sigma _{d}(x^{1/N})),  \tag{1}
\label{factorizacion}
\end{equation}%
where each $\sigma _{i}(t)$ is an element of $\mathbb{C}[[t]]$, i.e. a
formal power series. Moreover, it can be seen that this series has positive
radius of convergence and therefore defines a holomorphic function (cf. \cite%
{Greuel}). Using this result, it is possible to \emph{parameterize} the
curve 
\begin{equation*}
X=\{(x,y)\in \mathbb{C}^{2}:(x,y)\neq (0,0)\ \text{and}\ h(x,y)=0\}.
\end{equation*}%
The proof of the existence of (\ref{factorizacion}) can be done
constructively using Hensel's Lemma (Lemma \ref{lema}) making it possible to
determine which one of the series $\sigma _{i}(t)$ has only real
coefficients. Such series will be called throughout, a \emph{real series}.
This in turn allows us to parameterize each one of the trajectories in $%
X\cap \mathbb{R}^{2}$ which go through the origin. It will be shown that
these are the only ones that are relevant, since for a holomorphic function
to be real valued on a real sequence approaching zero, it must have a series
expansion around the origin with only real coefficient.

The parameterization of the zeroes of $h$ can be done by observing that 
\begin{equation*}
h(x^{N},y)=(y-\sigma _{1}(x))\cdots (y-\sigma _{d}(x))
\end{equation*}%
and consequently $X=\{(x,y)\in \mathbb{C}^{2}:h(x,y)=0\}$ is the union of
the sets $X_{i}=\{(z^{N},\sigma _{i}(z)):z\in \mathbb{C}\}$. This allows us
to prove the following central result.

\begin{theorem}
Let $\sigma _{1}(z),\ldots ,\sigma _{l}(z),\ l\leq d$, be the real series in
the equation (\ref{1}) which go through de origin (i.e. $\sigma _{i}(0)=0$,
for $i=1,\ldots ,l)$ . Then the limit 
\begin{equation*}
\lim_{(x,y)\rightarrow (0,0)}\frac{f_{1}(x,y)}{g_{1}(x,y)}
\end{equation*}%
exists if and only if 
\begin{equation*}
\lim_{t\rightarrow 0}\frac{f_{1}(t^{N},\sigma _{i}(t))}{g_{1}(t^{N},\sigma
_{i}(t))}=L_{i}
\end{equation*}%
exists, for $i=1,\ldots ,l$, and $L_{1}=\cdots =L_{l}$.
\end{theorem}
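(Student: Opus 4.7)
The plan is to derive the theorem from Proposition 2 together with the Puiseux factorization of $h$. By that proposition, the existence of the limit is equivalent to $q(x,y) = f_1(x,y)/g_1(x,y)$ converging uniformly to a single constant along the real points of $X = \{h = 0\}\setminus\{0\}$ as one approaches the origin. So the task reduces to analyzing the behavior of $q$ along $X \cap \mathbb{R}^2$ near the origin, and then recognizing that this real locus is covered by the real-coefficient Puiseux branches through the origin.

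For the \emph{only if} direction I would note that each real series $\sigma_i$ with $\sigma_i(0) = 0$ furnishes an analytic arc $t\mapsto (t^N,\sigma_i(t))$, defined for $t$ in a real neighborhood of $0$, lying entirely in $X \cap \mathbb{R}^2$ and approaching the origin as $t\to 0$. Hence if the full limit exists and equals $L$, its restriction along each such arc exists and equals $L$; that is, $L_i = L$ for every $i = 1,\ldots,l$.

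For the \emph{if} direction I would argue by contradiction using Proposition 2. Assuming all $L_i$ exist and equal a common $L$ but that the limit of $q$ is not $L$, the proposition yields some $\epsilon > 0$ and a sequence $(x_n,y_n) \in X \cap \mathbb{R}^2$ converging to the origin with $|q(x_n,y_n) - L| \geq \epsilon$. Using the complex factorization (\ref{factorizacion}), each $(x_n,y_n)$ equals $(t_n^N,\sigma_j(t_n))$ for some branch index $j = j(n)$ and some $t_n \in \mathbb{C}$. Since there are only finitely many branches, passing to a subsequence I may assume $j(n) = j_0$ is fixed, and choosing $t_n$ to be a real $N$-th root of $x_n$ (or handling $x_n < 0$ by the symmetric parametrization $t\mapsto -t$), the $t_n$ may be taken real with $t_n \to 0$. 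Then $\sigma_{j_0}(t_n) = y_n$ is real along a real sequence accumulating at $0$, and analyticity forces $\sigma_{j_0}$ to be a real series; together with $\sigma_{j_0}(0) = \lim y_n = 0$, this exhibits $\sigma_{j_0}$ as one of the $\sigma_1,\ldots,\sigma_l$. By hypothesis $\lim_{t\to 0} q(t^N,\sigma_{j_0}(t)) = L$, contradicting $|q(x_n,y_n) - L| \geq \epsilon$ for all $n$.

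The main obstacle is the \emph{if} direction, specifically the step that promotes a real sequence on a single branch of $X$ to the conclusion that the entire Puiseux series parameterizing that branch has real coefficients. This uses in an essential way the identity-type remark already cited in the text: a holomorphic function that is real on a sequence tending to $0$ must have a real power series expansion at $0$. Some care is also required with the $N$-th root choice to guarantee that the lifted parameters $t_n$ can simultaneously be chosen real and convergent to $0$. Once these technicalities are handled, the pigeonhole reduction to a single branch is straightforward because the number of branches in (\ref{factorizacion}) is at most $d$.
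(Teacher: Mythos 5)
The paper never actually writes down a proof of this theorem; it only sketches the intended argument in the paragraphs preceding the statement (Proposition~2 reduces the limit to the behaviour of $q$ on the real locus of $h=0$, the Puiseux factorization parametrizes that locus as $\cup_i\{(z^N,\sigma_i(z))\}$, and a holomorphic function that is real on a real sequence tending to $0$ must have a real expansion). Your proposal is a faithful fleshing-out of exactly that sketch: the \emph{only if} direction, the appeal to Proposition~2, the pigeonhole reduction to a single branch, and the identity-theorem step are all the intended ingredients and are correctly deployed.

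There is, however, one genuine gap in your \emph{if} direction, located precisely at the point you wave off parenthetically: the lift of $(x_n,y_n)$ to a \emph{real} parameter $t_n$. If $N$ is even and $x_n<0$, no real $t_n$ satisfies $t_n^N=x_n$, and the substitution $t\mapsto -t$ does not help because $(-t)^N=t^N$. The forced lift is $t_n=|x_n|^{1/N}\zeta$ with $\zeta=e^{i\pi/N}$; your identity-theorem argument then shows that $s\mapsto\sigma_{j_0}(\zeta s)$ is a real series, but this series is a Puiseux root of $h(-s^N,y)$ rather than of $h(s^N,y)$, so it need not coincide with any of $\sigma_1,\dots,\sigma_l$, and the hypothesis gives no control of $q$ along it. The phenomenon is real: for $h=y^2+x$ one has $h(x^2,y)=(y-ix)(y+ix)$, so no $\sigma_i$ is a real series, yet the real locus $y^2=-x$ is nonempty arbitrarily close to the origin; and the proof of Proposition~2 shows that the Lagrange extrema of $q$ on every small circle lie on $X\cap\mathbb{R}^2$, so the $x<0$ part cannot be discarded. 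To close the gap you must either arrange $N$ to be odd, or run the entire construction a second time for the half-plane $x\le 0$ (replace $h(x,y)$ by $h(-x,y)$ and $f_1,g_1$ accordingly, and adjoin the real branches of that polynomial to the list of test arcs). When $N$ is even, the arcs $t\mapsto(t^N,\sigma_i(t))$ with $t$ real only cover the $x\ge 0$ part of $X\cap\mathbb{R}^2$, so without this supplement the argument (and, as literally stated, the theorem's list of arcs) is incomplete.
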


\subsection{Newton's automorphism}

For each rational number $q\neq 0$ there exists a homomorphism $\alpha
_{q}:L^{\ast }\rightarrow L^{\ast }$, which sends $x$ to $x^{q}$ and fixes
the subfield $\mathbb{C}$. This homomorphism is constructed by first
defining a homomorphism from $\mathbb{C}[x]$ into $L^{\ast }$ which sends $x$
to $x^{q}$, then extending it to $\mathbb{C}[[x]]$, and then to the field of
fractions $\mathbb{C}((x))$. Since $L^{\ast }$ is an algebraic extension of $%
L$, this homomorphism extends to a homomorphism $\alpha _{q}$ from $L^{\ast
} $ to $\overline{L}$. It is clear that the image $\alpha _{q}$ lies inside $%
L^{\ast }$, and therefore one can regard $\alpha _{q}$ as an endomorphism of 
$L^{\ast }$. For $p\neq 0$ rational, let $\beta _{q,p}:L^{\ast
}[y]\rightarrow L^{\ast }[y]$ be the extension of $\alpha _{q}$ obtained
sending $y$ to $yx^{p}$. It is clear that $\beta _{q,p}$ is invertible and
its inverse is $\beta _{1/q,-p/q}$.

With these preliminaries we can now state the following fundamental theorem (%
\textrm{\cite{MOND}}). Even though this result is well known in the
literature, we provide a "constructive" proof, since it is the very heart of
the procedure \textit{sus }in the algorithm \textit{limite,} whose code we
give at the end.

\begin{theorem}
\label{Teoremilla} Every polynomial $h=y^{d}+h_{1}(x)y^{d-1}+\cdots
+h_{d}(x) $ with coefficients $h_{i}(x)$ in $L^{\ast }$ can be factored into
linear factors $h=(y-\sigma _{1})\cdots (y-\sigma _{d})$, with $\sigma
_{i}\in L^{\ast }$. Even more, if each $h_{i}(x)$ belongs to $\mathbb{C}%
[[x]] $, then there exists a positive integer $n$ such that all $\sigma
_{i}\in \mathbb{C}[[x^{1/n}]]$.
\end{theorem}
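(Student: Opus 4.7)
The plan is to argue by induction on the degree $d$. The base case $d=1$ is immediate since $h = y + h_1(x)$ already gives $\sigma_1 = -h_1(x) \in L^{\ast}$. For the inductive step it suffices to produce a single root $\sigma \in L^{\ast}$: polynomial division then yields $h = (y-\sigma)\tilde h$ with $\tilde h \in L^{\ast}[y]$ of degree $d-1$, and the induction hypothesis completes the factorization.

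To extract one root I would exploit the Newton polygon of $h$. Let $\nu_i \in \mathbb{Q}$ denote the order of $h_i(x)$ in $L^{\ast}$ (with $\nu_i = +\infty$ if $h_i = 0$), and form the lower convex hull of the points $(i,\nu_i)$ in the plane. Pick a segment of this polygon of slope $-p/q$ with $\gcd(|p|,q)=1$ and $q > 0$, and apply the Newton automorphism $\beta_{q,p}$ (substituting $x \mapsto x^{q}$, $y \mapsto y\,x^{p}$) together with clearing out the minimal common power of $x$ from the resulting coefficients. The outcome is a polynomial $\hat h(x,y) \in \mathbb{C}[[x]][y]$ whose reduction $\overline{\hat h}(y) = \hat h(0,y)$ is a nonzero polynomial in $y$ supported on the lattice points of the chosen segment, and whose nonzero roots correspond to admissible leading coefficients of Puiseux roots of $h$ attached to that segment. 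If $\overline{\hat h}(y)$ admits a nontrivial factorization into two coprime factors in $\mathbb{C}[y]$, Hensel's Lemma lifts it to $\hat h = GH$ in $\mathbb{C}[[x]][y]$; inverting $\beta_{q,p}$ by $\beta_{1/q,-p/q}$ yields a factorization of $h$ into two factors of strictly smaller degree, to each of which the inductive hypothesis applies.

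The principal obstacle is the case in which $\overline{\hat h}(y)$ is a pure power $(y-c)^{m}$ of a single linear factor, since Hensel's Lemma does not split $\hat h$ directly. One then translates $y \mapsto y + c$ and iterates the Newton polygon construction on the translated polynomial. Showing that this iteration terminates --- either by producing, after finitely many steps, a reduction that factors into coprime pieces or by isolating a single Puiseux series $\sigma \in L^{\ast}$ --- is the delicate technical point; a standard argument tracks a strictly decreasing invariant such as the intersection multiplicity of the successive characteristic factors, or exhibits a strictly increasing sequence of rational valuations whose $x$-adic limit defines $\sigma$. Once the first assertion is in hand, the second follows by bookkeeping: when all $h_i$ lie in $\mathbb{C}[[x]]$, the only source of fractional exponents in the $\sigma_i$ is the denominators $q$ appearing in the successive $\beta_{q,p}$ substitutions, and since each Hensel splitting strictly reduces the degree of the factor under consideration, the least common multiple of all $q$'s encountered in the full recursion divides $d!$. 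Taking $n$ to be this common multiple gives $\sigma_i \in \mathbb{C}[[x^{1/n}]]$ for every $i$.
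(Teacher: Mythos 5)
Your overall architecture (induction on $d$, a weighted substitution $\beta_{q,p}$ to normalize orders, Hensel's Lemma to split the reduction, inversion of the substitution, and the bookkeeping $n \mid d!$ for the denominators) matches the paper's proof. But there is a genuine gap at exactly the point you flag yourself: the case where the reduction $\overline{\hat h}(y)$ is a pure power $(y-c)^{m}$, so that Hensel's Lemma cannot split. You propose to translate by $c$ and iterate, and then wave at ``a standard argument'' via a decreasing invariant or a limiting sequence of valuations. That iteration is the entire difficulty of the classical Newton--Puiseux algorithm: one must show either that the process eventually produces a reduction with two distinct roots, or that the denominators $q$ stabilize so the infinite iteration converges to an element of $\mathbb{C}[[x^{1/n}]]$ for a fixed $n$ (and not merely to something with unbounded denominators). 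Without that argument the induction does not close, and your $d!$ bound on the denominators is also unjustified, since in the iterative route the number of substitutions is not a priori bounded by the degree.

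The paper sidesteps this case entirely with one preliminary move you omit: before forming the Newton polygon it applies the Tschirnhaus substitution $\phi: y \mapsto y - h_{1}(x)/d$, killing the coefficient of $y^{d-1}$. After the weighted substitution $\psi=\beta_{r,u_r}$ chosen so that the coefficient of $y^{d-r}$ ($r\geq 2$) has order exactly zero, the reduction $f=\overline{F}$ cannot be $(y-c)^{d}$: if $c=0$ this contradicts the nonvanishing constant term of the $y^{d-r}$ coefficient, and if $c\neq 0$ then $(y-c)^{d}$ has the nonzero $y^{d-1}$ coefficient $-dc$, contradicting the normalization. Hence $f$ always has two distinct roots, Hensel always applies, each step strictly drops the degree, and both the termination and the bound $n=rm$ with $r\leq d$ come for free. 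To repair your proof, either insert this preliminary substitution (after which your Newton-polygon step always lands in the coprime case), or actually supply the termination/stabilization argument for the iterated translations.
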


\begin{proof}
The proof proceeds by induction on the degree of $h$. The homomorphism $\phi
:L^{\ast }[y]\rightarrow L^{\ast }[y]$ sending $y$ to $y-(h_{1}(x)/d),$ and
fixing every element of $L^{\ast }$, is invertible and its inverse is the
homomorphism that fixes each element of $L^{\ast }$ and sends $y$ to $%
y+(h_{1}(x)/d)$. A simple calculation shows that $\phi (h)$ is a polynomial
with coefficients in $L^{\ast }$ such that the coefficient of $y^{d-1}$ is
zero. Then 
\begin{equation*}
\phi (h)=y^{d}+b_{2}(x)y^{d-2}+\cdots +b_{d}(x).
\end{equation*}%
Let us denote by $u_{i}$ the order of $b_{i}(x)$, and let $2\leq r\leq d$ be
the least index for which $u_{r}/r=\min \{u_{i}/i:2\leq i\leq d\}$. Let us
define $\psi =\beta _{r,u_{r}}:L^{\ast }[y]\rightarrow L^{\ast }[y]$. $\psi $
sends $x$ to $x^{r}$, $y$ to $x^{u_{r}}y$ (and its inverse sends $x$ to $%
x^{1/r}$, and $y$ to $x^{-u_{r}/r}y$). Clearly, 
\begin{align}
\psi (\phi (h))& =x^{du_{r}}y^{d}+b_{2}(x^{r})x^{(d-2)u_{r}}y^{d-2}+\cdots
+b_{d}(x^{r}) \\
& =x^{du_{r}}(y^{d}+x^{-2u_{r}}b_{2}(x^{r})y^{d-2}+\cdots
+x^{-ku_{r}}b_{k}(x^{r})y^{d-k}+\cdots +x^{-du_{r}}b_{d}(x^{r})).
\end{align}%
The order of each term $x^{-ku_{r}}b_{k}(x^{r})$ of the polynomial inside
the parentheses is given by $-ku_{r}+ru_{k}\geq 0$ (since $u_{k}/k\geq
u_{r}/r$). Furthermore, the order of $x^{-ru_{r}}b_{r}(x^{r})$ is $0$.
Therefore, if 
\begin{equation*}
F^{\prime }=y^{d}+x^{-2u_{r}}b_{2}(x^{r})y^{d-2}+\cdots
+x^{-ku_{r}}b_{k}(x^{r})y^{d-k}+\cdots +x^{-du_{r}}b_{d}(x^{r}),
\end{equation*}%
by taking $N$ large enough, we have that $F=F^{\prime }(x^{N},y)\in \mathbb{C%
}[[x]][y]$, and $F$ admits a modulo $x$ reduction $f=\overline{F}\in \mathbb{%
C}[y]$ having at least two distinct roots. For if $f$ had a single root $c$,
then it is impossible that $c=0$ because the order of $%
x^{-ru_{r}}b_{r}(x^{r})$ is zero. And if $c\neq 0$ then $%
f=(y-c)^{d}=y^{d}-dcy^{d-1}+\cdots $ would have a nontrivial term in $%
y^{d-1} $, which is also impossible. Therefore, since $f\in \mathbb{C}[y]$,
one obtains $f=f_{1}f_{2}$, with $f_{1}$ and $f_{2}$ monic and of degrees
strictly smaller than $d$. Hensel's Lemma guarantees the existence of a
lifting $F=F_{1}F_{2}$ with $F_{1}$ and $F_{2}$ monic, and of degrees
strictly smaller than the degree of $F$. Consequently, $F^{\prime
}(x,y)=F_{1}^{\prime }F_{2}^{\prime }$ with $F_{i}^{\prime
}=F_{i}(x^{1/N},y) $. In conclusion, $\psi (\phi
(h))=x^{du_{r}}F_{1}^{\prime }F_{2}^{\prime }$. By the induction hypothesis
we know that $F_{1}^{\prime }F_{2}^{\prime }=\prod_{j=1}^{d}(y-\sigma _{j})$%
, with $\sigma _{j}\in L^{\ast }$ and therefore 
\begin{align}
\phi (h)& =x^{du_{r}/r}\psi ^{-1}(F_{1}^{\prime }F_{2}^{\prime }) \\
& =x^{du_{r}/r}\prod_{j=1}^{d}(x^{-u_{r}/r}y-\psi ^{-1}(\sigma _{j})) \\
& =\prod_{j=1}^{d}(y-x^{u_{r}/r}\psi ^{-1}(\sigma _{j}))
\end{align}%
is also a product of linear factors. This finishes the induction. The last
claim in the theorem also follows by induction on the degree of $h$. In
fact, if the coefficients of $h$ belong to $\mathbb{C}[[x]]$, then $%
F^{\prime }\in \mathbb{C}[[x]][y]$ and consequently $F_{1}^{\prime }$ and $%
F_{2}^{\prime }$ also belong to $\mathbb{C}[[x]][y]$. The induction
hypothesis guarantees the existence of a positive integer $m$ such that $%
F_{1}^{\prime }F_{2}^{\prime }=\prod_{j=1}^{d}(y-\sigma _{j})$, with $\sigma
_{j}\in \mathbb{C}[[x^{1/m}]]$. Furthermore, in the polynomial $\phi (h)$
all the $u_{k}$ are nonnegative and therefore $u_{r}/r\geq 0$. Consequently,
each element $x^{u_{r}/r}\psi ^{-1}(\sigma _{j})=x^{u_{r}/r}\sigma
_{j}(x^{1/r})$ belongs to $\mathbb{C}[[x^{1/n}]]$ with $n=rm$, and therefore
the desired factorization for $h$ is obtained.
\end{proof}

It easily follows from the last part of the proof of theorem \ref{Teoremilla}
shows that

\begin{remark}
\label{nota} If $h=y^{d}+h_{1}(x)y^{d-1}+\cdots +h_{d}(x)$ is a monic
polynomial with coefficients in $\mathbb{C}[[x]]$, then there exists a power 
$r>0$, and polynomials $g_{1}(x,y)$ and $g_{2}(x,y)$ in $\mathbb{C}[[x]][y]$%
, which are monic in the variable $y$ and of degrees $d_{1},d_{2}<d$, such
that $h(x^{r},y)=g_{1}(x,y)g_{2}(x,y)$.
\end{remark}

Theorem \ref{Teoremilla} admits the following refinement (\cite{MOND}).

\begin{theorem}
\label{Teom} Let $h=y^{d}+h_{1}(x)y^{d-1}+\cdots +h_{d}(x)$ be a monic
polynomial with coefficients in $\mathbb{C}[[x]]$ that is irreducible over $%
L=\mathbb{C}((x))[y]$. Then, if $\omega $ denotes a primitive $d^{th}$ root
of unity, there exists $\sigma (t)=\sum_{k=0}^{\infty }c_{k}t^{k}$ such that 
\begin{equation*}
h=(y-\sigma (\omega x^{1/d}))\cdots (y-\sigma (\omega ^{d}x^{1/d}))
\end{equation*}%
where $\sigma (\omega ^{r}x^{1/d})=\sum_{k=0}^{\infty }c_{k}(\omega
^{r}x^{1/d})^{k}$.
\end{theorem}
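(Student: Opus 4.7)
The plan is to deduce the theorem from Theorem \ref{Teoremilla} together with the Galois theory of the extension $\mathbb{C}((x))\subset\mathbb{C}((x^{1/n}))$. First I would apply Theorem \ref{Teoremilla} to obtain an integer $n>0$ and a factorization $h=\prod_{j=1}^{d}(y-\sigma_j)$ with each $\sigma_j\in\mathbb{C}[[x^{1/n}]]$. Setting $K=\mathbb{C}((x))$ and $M=\mathbb{C}((x^{1/n}))$, the crucial observation is that $M/K$ is a Galois extension (since $\mathbb{C}$ already contains every root of unity) with cyclic Galois group of order $n$, generated by the automorphism $\tau$ characterized by $\tau(x^{1/n})=\zeta_n x^{1/n}$ for a fixed primitive $n$-th root of unity $\zeta_n$.

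Next I would use the irreducibility of $h$ over $K$. Since the $\sigma_j$ lie in the Galois extension $M$ and $h$ is the minimal polynomial of each of them over $K$, the group $\langle\tau\rangle$ acts transitively on $\{\sigma_1,\ldots,\sigma_d\}$. The stabilizer of $\sigma_1$ is therefore a subgroup of index $d$ of the cyclic group $\langle\tau\rangle$, which forces $d\mid n$. Writing $m=n/d$, the stabilizer is precisely $\langle\tau^d\rangle$, and the $d$ elements $\sigma_1,\tau(\sigma_1),\ldots,\tau^{d-1}(\sigma_1)$ exhaust the roots.

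Then I would translate the stabilizer condition into the shape of $\sigma_1$. Expanding $\sigma_1=\sum_{k\geq 0}c_k x^{k/n}$, the identity $\tau^d(\sigma_1)=\sigma_1$ requires $\zeta_n^{dk}=1$ whenever $c_k\neq 0$, i.e.\ $m\mid k$. Hence only powers $x^{jm/n}=x^{j/d}$ appear, so that
\begin{equation*}
\sigma_1=\sum_{j\geq 0} c_{jm}\,x^{j/d}.
\end{equation*}
Defining $\sigma(t)=\sum_{j\geq 0} c_{jm}\,t^{j}\in\mathbb{C}[[t]]$ and $\omega=\zeta_n^{m}$, which is a primitive $d$-th root of unity, a direct computation gives
\begin{equation*}
\tau^{r}(\sigma_1)=\sum_{j\geq 0} c_{jm}\,\zeta_n^{jmr}\,x^{j/d}=\sigma(\omega^{r}x^{1/d}).
\end{equation*}
Since the $\tau^{r}(\sigma_1)$ for $r=1,\ldots,d$ enumerate all the roots, one concludes $h=\prod_{r=1}^{d}(y-\sigma(\omega^{r}x^{1/d}))$, as desired.

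The routine part will be the power series manipulations; the only genuinely delicate step is the Galois-theoretic argument, namely verifying that $M/K$ is Galois and cyclic, and then using the transitivity of the action on the roots to simultaneously conclude $d\mid n$ and identify the stabilizer as $\langle\tau^{d}\rangle$. Once this is in place, the remainder is a mechanical check that the stabilizer condition translates to $\sigma_1\in\mathbb{C}[[x^{1/d}]]$ and that $\tau$ acts on it by multiplication of $x^{1/d}$ by $\omega$.
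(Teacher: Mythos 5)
Your proof is correct, but note that the paper does not actually prove this statement: Theorem \ref{Teom} is stated as a known refinement and attributed to the reference [M], with no argument supplied. Your Galois-theoretic proof is the standard one and fills that gap soundly. The chain of steps checks out: $t^{n}-x$ is irreducible over $K=\mathbb{C}((x))$ (the paper itself records this), $\mathbb{C}$ contains all $n$-th roots of unity, so $M=\mathbb{C}((x^{1/n}))$ is a splitting field of $t^{n}-x$ and hence Galois over $K$ with cyclic group $\langle\tau\rangle$ of order $n$; normality gives transitivity of $\langle\tau\rangle$ on the roots of the irreducible $h$; and the translation of $\tau^{d}(\sigma_{1})=\sigma_{1}$ into the condition $m\mid k$ on the exponents is a correct mechanical computation, with $\omega=\zeta_{n}^{m}$ indeed primitive of order $d$. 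Two small points you should make explicit. First, the orbit--stabilizer count that forces the stabilizer to have index exactly $d$ uses that the $d$ roots $\sigma_{1},\ldots,\sigma_{d}$ are distinct; this follows because $h$ is irreducible over a field of characteristic zero and is therefore separable, but it deserves a sentence. Second, the conclusion that $\sigma(t)=\sum_{j\ge 0}c_{jm}t^{j}$ is an honest element of $\mathbb{C}[[t]]$ (nonnegative integer exponents, as required by the statement) relies on the second clause of Theorem \ref{Teoremilla}, namely that the $\sigma_{j}$ lie in $\mathbb{C}[[x^{1/n}]]$ rather than merely in $L^{\ast}$; you do invoke that clause, so this is only a matter of flagging where it is used.
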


The following result allows to identify the real series in the factorization
given in (\ref{Teoremilla}).

\begin{lemma}
\label{lema} Let $F=y^{d}+b_{1}(x)y^{d-1}+\cdots +b_{d}(x)$ be a monic
polynomial of degree $d$ in the variable $y$ and whose coefficients are real
power series, i.e. $b_{i}(x)\in \mathbb{R}[[x]]$. Then $\overline{F}%
=(y-r)^{d}$, with $r\in \mathbb{R}$, if and only if its factorization in
Puiseaux series has the form 
\begin{equation}
F=\prod_{i=1}^{s}F_{i}  \label{forma}
\end{equation}%
where 
\begin{equation*}
F_{i}=(y-\sigma _{i}(x^{1/d_{i}}))(y-\sigma _{i}(\omega
_{i}x^{1/d_{i}}))\cdots (y-\sigma _{i}(\omega _{i}^{d_{i}-1}x^{1/d_{i}})),
\end{equation*}%
and each $\sigma _{i}(t)=\sum_{k=0}^{\infty }c_{k}t^{k}$ is a real series, $%
\omega _{i}$ is a primitive $d_{i}^{th}$ root of unity, $%
\sum_{i=1}^{s}d_{i}=d,$ and $\sigma _{i}(0)=r$.
\end{lemma}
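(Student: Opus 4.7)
The statement is a biconditional, and the two directions have different flavor. The ``$\Leftarrow$'' direction is immediate: substituting $x=0$ in each $F_i$ of (\ref{forma}) annihilates the $x^{1/d_i}$ terms, giving $\overline{F_i}(y)=(y-\sigma_i(0))^{d_i}=(y-r)^{d_i}$; multiplying over $i$ yields $\overline{F}(y)=(y-r)^{d}$ with $r\in\mathbb{R}$ by hypothesis.

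For the ``$\Rightarrow$'' direction I would first factor $F$ into monic irreducibles in the UFD $\mathbb{C}[[x]][y]$, writing $F=F_1\cdots F_s$ with $\deg F_i=d_i$ and $\sum d_i=d$; the existence of such a factorization is guaranteed by Theorem \ref{Teoremilla} together with Remark \ref{nota}. Applying Theorem \ref{Teom} to each irreducible factor produces $\sigma_i(t)\in\mathbb{C}[[t]]$ and a primitive $d_i$-th root of unity $\omega_i$ such that $F_i=\prod_{k=0}^{d_i-1}(y-\sigma_i(\omega_i^{k}x^{1/d_i}))$. Specializing at $x=0$ gives $\overline{F_i}(y)=(y-\sigma_i(0))^{d_i}$; comparing $\overline{F}=\prod_i (y-\sigma_i(0))^{d_i}$ with $(y-r)^{d}$ in the UFD $\mathbb{C}[y]$ forces $\sigma_i(0)=r$ for every $i$.

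The delicate point is that each $\sigma_i$ is in fact a real series. I would let $\tau$ denote complex conjugation acting coefficient-wise on $\mathbb{C}[[x^{1/N}]]$ (with $N$ a common denominator of the fractional exponents) while fixing the indeterminate. Since $F\in\mathbb{R}[[x]][y]$, we have $\tau F=F$, so $\tau$ permutes the set $\{F_i\}$, and a direct computation shows that $\tau F_i$ is the factor associated by Theorem \ref{Teom} to the conjugated series $\tau\sigma_i$ (the simultaneous swap $\omega_i\mapsto\omega_i^{-1}$ leaves the unordered set of roots unchanged). By the uniqueness in Theorem \ref{Teom}, the series attached to a given irreducible factor is determined only up to the Galois action $t\mapsto\omega_i t$, so $\tau\sigma_i=\sigma_i\circ[\omega_i^{m}]$ for some integer $m$; imposing this relation coefficient by coefficient and using that $\sigma_i(0)=r\in\mathbb{R}$ forces $m=0$, hence $\tau\sigma_i=\sigma_i$ and $\sigma_i\in\mathbb{R}[[t]]$.

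The main obstacle I anticipate is this last step: showing that the rotation index $m$ must vanish. This amounts to exploiting the reality of the single root $r$ in order to rigidify the inherent Puiseux ambiguity $\sigma_i\mapsto\sigma_i\circ[\omega_i^{m}]$, and it is where the hypothesis $\overline{F}=(y-r)^{d}$ with $r$ real, rather than merely $\overline{F}$ splitting as a perfect $d$-th power, is genuinely used.
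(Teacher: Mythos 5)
Your ``$\Leftarrow$'' direction is correct and is essentially the paper's argument. The ``$\Rightarrow$'' direction, however, has a genuine gap, and it sits exactly where you said you anticipated trouble. First, complex conjugation $\tau$ only \emph{permutes} the set of irreducible factors $\{F_i\}$ of $F$ in $\mathbb{C}[[x]][y]$; your argument tacitly assumes $\tau F_i=F_i$ for each $i$. If $\tau$ swaps $F_i$ with some $F_j\neq F_i$ you only obtain a relation between $\tau\sigma_i$ and $\sigma_j$, which says nothing about the reality of $\sigma_i$ (consider $F=y^2+x^2=(y-ix)(y+ix)$: here $\overline{F}=(y-0)^2$, but $\tau$ interchanges the two factors and neither root is real). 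Second, and more seriously, even when $\tau F_i=F_i$ the relation $\tau\sigma_i=\sigma_i\circ[\omega_i^{m}]$ does \emph{not} force $m=0$: the substitution $t\mapsto\omega_i^{m}t$ fixes the constant term, so the hypothesis $\sigma_i(0)=r\in\mathbb{R}$ imposes no constraint whatsoever on $m$. The polynomial $F=y^2+x$ is a counterexample to your final step: it is real, irreducible over $\mathbb{C}((x))$, with $\overline{F}=(y-0)^2$, and its Puiseaux root is $\sigma(t)=it$, for which $\tau\sigma(t)=-it=\sigma(-t)$, i.e. $m=1$, and $\sigma$ is not real.

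These same two examples show that the ``only if'' implication of the lemma is in fact false as stated: $y^2+x$ and $y^2+x^2$ have real coefficients and reduction $(y-0)^2$, yet admit no factorization of the form (\ref{forma}) with all $\sigma_i$ real. So the gap cannot be closed by refining your Galois argument (or any other). For comparison, the paper's own proof takes a different route --- induction on $d$, splitting $F(x^{N},y)=GH$ via Remark \ref{nota} and invoking the uniqueness clause of Hensel's Lemma over $\mathbb{R}[[x]]$ to conclude that $G$ and $H$ are real --- but that step is equally unjustified, since the reductions $\overline{G}=(y-r)^{d_1}$ and $\overline{H}=(y-r)^{d_2}$ are not relatively prime, so Hensel's Lemma and its uniqueness assertion do not apply; the examples above defeat it as well. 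What is true, and what the subsequent theorem actually requires, is only the converse-flavored statement: any \emph{real} Puiseaux series appearing in the factorization of a real $F$ must occur inside a factor whose reduction is $(y-r)^{e}$ with $r$ real --- not that every series inside such a factor is real.
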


\begin{proof}
Let us first consider the only if part of the equivalence. We proceed by
induction on $d$. If the degree of $F$ is $d=1$, then $F=y-b_{1}(x)$, with $%
b_{1}(x)\in \mathbb{R}[[x]]$, is a factorization in Puiseaux series and $%
b_{1}(0)=r$. If $d>1$, by remark \ref{nota}, there exists an integer $N>0$
such that $F(x^{N},y)=G(x,y)H(x,y)$ where $G$ and $H$ are polynomials which
are monic and of degrees $d_{1},d_{2}<d$ in the variable $y$. Thus 
\begin{equation*}
\overline{F}=F(0,y)=G(0,y)H(0,y).
\end{equation*}%
And this implies that 
\begin{equation*}
\overline{G}=(y-r)^{d_{1}},\ \text{and}\ \overline{H}=(y-r)^{d_{2}}.
\end{equation*}%
By Hensel's Lemma applied to the ring $\mathbb{R}[[x]][y]$, and in
particular, by its claim about uniqueness, one obtains that $G$ and $H$ are
polynomials with coefficients in $\mathbb{R}[[x]]$. By the induction
hypothesis $G$ and $H$ can be factored in the form (\ref{forma}), so $F$ can
also be factored in this way. Conversely, if (\ref{forma}) holds, then it
follows, by taking $N=d_{1}\ldots d_{s}$, that 
\begin{equation}
F(x^{N},y)=\prod_{i=1}^{s}(y-\sigma _{i}(x^{e_{i}}))(y-\sigma _{i}(\omega
_{i}x^{e_{i}}))\cdots (y-\sigma _{i}(\omega _{i}^{d_{i}-1}x^{e_{i}}))
\label{otra}
\end{equation}%
where $e_{i}=N/d_{i}$ and $\sigma _{i}(t)$ is a real series. By replacing $x$
by $0$ in (\ref{otra}) one obtains 
\begin{equation*}
\overline{F}=F(0,y)=\prod_{i=1}^{s}(y-\sigma _{i}(0))^{d_{i}}=(y-r)^{d}.
\end{equation*}
\end{proof}

Let now $F=y^{d}+b_{1}(x)y^{d-1}+\cdots +b_{d}(x)$ be a monic polynomial
whose coefficients are in $\mathbb{R}[[x]]$ and let us denote its reduction
modulo $x$ by $f$. We can write%
\begin{equation*}
f=(y-r_{1})^{d_{1}^{\prime }}\cdots (y-r_{s})^{d_{s}^{\prime
}}(y-c_{1})^{d_{1}}(y-\overline{c_{1}})^{d_{1}}\cdots (y-c_{l})^{d_{l}}(y-%
\overline{c_{l}})^{d_{l}},
\end{equation*}%
where $r_{1},\ldots ,r_{s}$ are the real roots of $f$, and $c_{i},\overline{%
c_{i}}$ are the nonreal ones. Let us define $f_{i}(y)=(y-r_{i})^{d_{i}^{%
\prime }}$ and 
\begin{equation*}
g_{i}(y)=(y-c_{i})^{d_{i}}(y-\overline{c_{i}})^{d_{i}}=(y^{2}-\alpha
_{i}y+\beta _{i})^{d_{i}},
\end{equation*}%
with $\alpha _{i},\beta _{i}$ real. Hensel's Lemma provides us with a
lifting of the factorization $f_{1}\cdots f_{s}g_{1}\cdots g_{l}$, of the
form 
\begin{equation*}
F=F_{1}\cdots F_{s}G_{1}\cdots G_{l}
\end{equation*}%
i.e., $\overline{F_{i}}=f_{i}$ and $\overline{G_{i}}=g_{i}$. From the proof
of Hensel's Lemma it follows that each $F_{i}$ is a monic polynomial in the
variable $y$ with coefficients in $\mathbb{R}[[x]]$. Each $G_{i}$ admits a
factorization $\prod_{j=1}^{q_{i}}G_{ij}$ into irreducible factors in $%
\mathbb{C}[[x]][y]$, and, by Gauss' Lemma, also in $\mathbb{C}((x))[y]$ (see 
\cite{Lang}). Notice that if $G_{ij}$ has degree $e_{ij}$, then $%
\sum_{j=1}^{q_{i}}e_{ij}=2d_{i}$. By theorem \ref{Teom} each $G_{ij}$ can be
factored as 
\begin{equation*}
G_{ij}=\prod_{k=1}^{e_{ij}}(y-\sigma _{j}(\omega _{j}^{k}x^{1/e_{ij}})),
\end{equation*}%
where $\omega _{j}$ is an $e_{ij}^{th}$ primitive root of unity and $\sigma
_{j}(t)\in \mathbb{C}[[t]]$. If we let $e_{i}=e_{i1}\ldots e_{iq_{i}}$, it
is clear that 
\begin{equation*}
G_{i}(x^{e_{i}},y)=\prod_{j=1}^{q_{i}}\prod_{k=1}^{e_{ij}}(y-\sigma
_{j}(\omega _{j}^{k}x^{e_{i}/e_{ij}})),
\end{equation*}%
and as a consequence 
\begin{equation*}
G_{i}(0,y)=\prod_{j=1}^{q_{i}}(y-\sigma _{j}(0))^{e_{ij}}.
\end{equation*}%
It follows that $q_{i}=2$ and $\sigma _{1}(0)=c_{i}$ and $\sigma _{2}(0)=%
\overline{c}_{i}$ (or the other way round). Therefore none of the $\sigma
_{j}$ is a real series. By lemma \ref{lema}, $F_{k}=\prod_{h=1}^{m}F_{kh},$
and 
\begin{equation}
F_{kh}=(y-\sigma _{hk}(x^{1/d_{hk}}))(y-\sigma _{hk}(\omega
_{hk}x^{1/d_{hk}}))\cdots (y-\sigma _{hk}(\omega
_{hk}^{d_{hk}-1}x^{1/d_{hk}})),
\end{equation}%
with $\sigma _{hk}(t)$ being a real series. The following theorem summarizes
what has been achieved so far.

\begin{theorem}
Let $F=y^{d}+b_{1}(x)y^{d-1}+\cdots +b_{d}(x)$ be a polynomial that is monic
in the variable $y$ and whose coefficients lie in $\mathbb{R}[[x]]$, and let 
$f$ be its reduction modulo $x$. Then $f$ can be written as 
\begin{equation*}
f=(y-r_{1})^{d_{1}^{\prime }}\cdots (y-r_{s})^{d_{s}^{\prime
}}(y-c_{1})^{d_{1}}(y-\overline{c_{1}})^{d_{1}}\cdots (y-c_{l})^{d_{l}}(y-%
\overline{c_{l}})^{d_{l}}.
\end{equation*}%
Let%
\begin{equation*}
f_{i}(y)=(y-r_{i})^{d_{i}^{\prime }},\text{ }g_{i}(y)=(y-c_{i})^{d_{i}}(y-%
\overline{c_{i}})^{d_{i}}=(y^{2}-\alpha _{i}y+\beta _{i})^{d_{i}},
\end{equation*}%
with $r_{i},\alpha _{i},\beta _{i}$ real. Hensel's Lemma gives a lifting of
the factorization of $f=$ $f_{1}\cdots f_{s}g_{1}\cdots g_{l}$ 
\begin{equation*}
F=F_{1}\cdots F_{s}G_{1}\cdots G_{l}
\end{equation*}%
with $\overline{F_{i}}=f_{i}$ and $\overline{G_{i}}=g_{i}$. Then, in the
Puiseaux series factorization of $F$ the only real series occur in the
decomposition into linear factors of the $F_{i}$.
\end{theorem}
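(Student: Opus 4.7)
The statement mostly collects conclusions drawn in the paragraphs preceding it, so the plan is to reorganise that discussion into a clean three-step argument and spell out the points that were glossed over.

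\textbf{Step 1: Applying Hensel's Lemma over $\mathbb{R}[[x]]$.} First I would check that the factors $f_1,\ldots,f_s,g_1,\ldots,g_l$ in $\mathbb{R}[y]$ are pairwise coprime: this is immediate because the $r_i$ are distinct real numbers, the pairs $\{c_i,\overline c_i\}$ are distinct in $\mathbb{C}$, and no $r_i$ equals any $c_j$ or $\overline{c_j}$. Since the local ring $(\mathbb{R}[[x]],(x))$ satisfies the hypotheses of Hensel's Lemma (Lemma~\ref{lema} is stated for an arbitrary field $k$, so the $k=\mathbb{R}$ case applies), iterating the lemma produces a unique factorization
\begin{equation*}
F = F_1\cdots F_s\, G_1\cdots G_l
\end{equation*}
with $F_i,G_i\in\mathbb{R}[[x]][y]$, monic in $y$, and $\overline{F_i}=f_i$, $\overline{G_i}=g_i$. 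The uniqueness is what forces the lifted factors to lie in $\mathbb{R}[[x]][y]$ rather than only in $\mathbb{C}[[x]][y]$.

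\textbf{Step 2: Real series inside the $F_i$.} Each $F_i\in\mathbb{R}[[x]][y]$ has reduction $\overline{F_i}=(y-r_i)^{d_i'}$, a single real root raised to a power. This is exactly the hypothesis of Lemma~\ref{lema}, which therefore gives a Puiseux-series factorization of $F_i$ in which every series $\sigma_{hi}(t)$ has real coefficients and satisfies $\sigma_{hi}(0)=r_i$. So the $F_i$ contribute only real Puiseux series to the factorization of $F$.

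\textbf{Step 3: No real series inside the $G_i$.} The delicate point is to show that none of the Puiseux series arising from $G_i$ is real. I would argue as in the discussion preceding the theorem: factor $G_i=\prod_{j=1}^{q_i}G_{ij}$ into monic irreducibles in $\mathbb{C}[[x]][y]$ (equivalently, by Gauss' Lemma, in $\mathbb{C}((x))[y]$), and apply Theorem~\ref{Teom} to each irreducible $G_{ij}$ of degree $e_{ij}$ to obtain a series $\sigma_j(t)\in\mathbb{C}[[t]]$ with
\begin{equation*}
G_{ij}=\prod_{k=1}^{e_{ij}}\bigl(y-\sigma_j(\omega_j^k x^{1/e_{ij}})\bigr).
\end{equation*}
Setting $x=0$ kills every positive power of $x^{1/e_{ij}}$, so
\begin{equation*}
\overline{G_i}(y)=\prod_{j=1}^{q_i}\bigl(y-\sigma_j(0)\bigr)^{e_{ij}}=(y-c_i)^{d_i}(y-\overline{c_i})^{d_i}.
\end{equation*}
Matching roots with multiplicities (and using that $c_i\neq\overline{c_i}$) forces $q_i=2$, $e_{i1}=e_{i2}=d_i$, and $\{\sigma_1(0),\sigma_2(0)\}=\{c_i,\overline{c_i}\}$. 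Since $c_i,\overline{c_i}\notin\mathbb{R}$, the constant term of each $\sigma_j$ is non-real, so neither $\sigma_j$ is a real series.

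\textbf{Conclusion.} The Puiseux factorization of $F$ is obtained by concatenating the Puiseux factorizations of the $F_i$'s and the $G_{ij}$'s. Step 2 shows every series coming from an $F_i$ is real, and Step 3 shows no series coming from a $G_i$ is real, which is exactly the claim. The main obstacle is Step 3, where one has to be careful not to confuse irreducibility of $G_i$ over $\mathbb{R}[[x]][y]$ with irreducibility over $\mathbb{C}[[x]][y]$; the argument really needs the further $\mathbb{C}$-factorization of each $G_i$ together with Theorem~\ref{Teom} in order to identify the Puiseux branches and their constant terms.
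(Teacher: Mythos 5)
Your proof is correct and follows essentially the same route as the paper's own argument (which appears as the discussion immediately preceding the theorem statement): Hensel's Lemma over $\mathbb{R}[[x]]$ for the lifting, Lemma \ref{lema} for the reality of the Puiseux series coming from the $F_i$, and Theorem \ref{Teom} applied to the irreducible factors of each $G_i$ to rule out real series there. Two cosmetic quibbles: in Step 1 your parenthetical reference to Lemma \ref{lema} should point to Hensel's Lemma instead, and the claim $q_i=2$ (which the paper also makes) is not strictly forced, since several $\sigma_j(0)$ could coincide; but all the conclusion needs is $\sigma_j(0)\in\{c_i,\overline{c_i}\}\not\subset\mathbb{R}$, which you do establish.
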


Thus if $X$ denotes the curve in $\mathbb{C}^{2}$ formed by the zeroes of $h$
then 
\begin{equation*}
X\cap \mathbb{R}^{2}=\cup _{i=1}^{s}\{(t^{d},\sigma _{i}(t^{m_{i}})):t\in
\lbrack a,b]\subset \mathbb{R}\}.
\end{equation*}

\section{Algorithm for the computation of limits}

As an application of the results established in the previous section, we
present in this section an algorithm implemented in \emph{Maple 12} for the
determination of limits of the form 
\begin{equation*}
\underset{(x,y)\rightarrow (0,0)}{\lim }\frac{f(x,y)}{g(x,y)}
\end{equation*}
where $f,g\in \mathbb{R}[x,y]$ and $g$ has an isolated zero at $(0,0)$. The
complete routine is called \emph{limite} and comprises several subroutines
that are documented next.

The routine \textit{repeticion} takes as input a list $L$ and forms a list
of lists, each one formed by each element of $L$, repeated as many times as
it appears in $L$. For instance, if $L=[1,2,1,3,1,2,4,3]$ then \textit{%
repeticion} produces the list $[[1,1,1],[2,2],[3,3],[4]]$. The \textit{Maple
12} code for this routine is as follows. \medskip

$>$ repeticion:=proc(L)

$>$ local S,j,H,i;

$>$ H:=convert(convert(L,set),list);

$>$ for i from 1 to nops(H) do S[i]:=[];

$>$ for j from 1 to nops(L)do

$>$ if L[j]=H[i] then S[i]:=[op(S[i]),H[i]]; end if;

$>$ end do; end do;

$>$ RETURN([seq(S[i], i=1..nops(H))]);

$>$ end proc:\medskip

The routine \textit{suprime} takes as input a list $L$ and eliminates its
redundancy. For instance, if $L=[1,a,1,3,a,b,c]$, then \emph{suprime}
returns $G=[1,a,3,b,c]$. The code for this routine is \medskip

$>$ suprime:=proc(L)

$>$ local S,i,G; G:=[L[1]];

$>$ for i from 2 to nops(L) do

$>$ S:=convert(evalf(G),set);

$>$ if (evalb(`in`(evalf(L[i]),S)) = false) then G:=[op(G),L[i]];

$>$ end if;

$>$ end do;

$>$ RETURN(G);

$>$ end proc:\medskip

The routine \textit{poli} takes as input a list $L$ whose elements are
complex numbers and constructs another list containing a polynomial of the
form $(y-r)^{d}$ for each real $r$ that appears exactly $d$ times in $L$,
and a polynomial of the form $((y-z)(y-\overline{z}))^{d}$ for each nonreal $%
z$ appearing together with its conjugate $\overline{z}$ exactly $d$ times in 
$L$. For example, if $L=[1,2-i,1,2+i]$ then \textit{poli} returns $%
[(y-1)^{2},(y-(2-i))(y-(2+i))]$. Its code is\medskip

$>$ poli:=proc(L)

$>$ local g,H,i;

$>$ H:=repeticion(L);

$>$ for i from 1 to nops(H) do

$>$ if Im(evalf(H[i][1]))=0 then g[i]:=(y-H[i][1])\symbol{94}nops(H[i]); else

$>$ g[i]:=((y-H[i][1])*(y-conjugate(H[i][1])))\symbol{94}nops(H[i]); end if;
end do;

$>$ suprime([seq(g[i],i=1..nops(H))]);

$>$ end proc:\medskip

The routine \textit{mochar} takes a polynomial $f(x,y)=a_{0}(x)y^{d}+\cdots
+a_{k}(x)y^{k}+\cdots +a_{d}(x)$ and eliminates from each coefficient those
powers of $x$ which are larger than $n$, i.e. it calculates $f(x,y)$ modulo $%
x^{n+1}$. The code for this routine is\medskip

$>$ mochar:=proc(f,n)

$>$ local c,d,i,g;

$>$ d:=degree(f,y); g:=0;

$>$ for i from 0 to d do

$>$ c[d-i]:=mtaylor(coeff(f,y,d-i), [x], n+1);

$>$ g:=g+c[d-i]*y\symbol{94}(d-i);

$>$ end do;

$>$ collect(g,y);

$>$ end proc:\medskip

The routine \textit{monico} takes a polynomial $f$ in the variable $y$ and
divides it by the coefficient of the highest power of $y$. Its code
is\medskip

$>$ monico:=proc(f)

$>$ local c,d;

$>$ d:=degree(f,y);

$>$ c:=coeff(f,y,d);

$>$ collect(expand(1/c*f),y);

$>$ end proc:\medskip

The routine \textit{Hensel} has four entries. The first entry is a
polynomial $F(x,y)$ en $\mathbb{C}[x][y]$, which is monic in $y$. The second
and third entries are polynomials $g(x)$, $h(y)$ such that $F(0,y)=g(y)h(y)$%
. The fourth entry is an integer $n$. \textit{Hensel} calculates polynomials 
$G(x,y)$ and $H(x,y)$ such that $F=GH$ modulo $x^{n+1}$. The code for this
routine is\medskip

$>$ Hensel:=proc(poly,gg,hh,n)

$>$ local L,H,G,i,l,f,g,h,t;

$>$ f[0]:=coeff(poly,x,0);

$>$ g[0]:=gg;

$>$ h[0]:=hh;

$>$ f[1]:=coeff(poly,x,1);

$>$ gcdex(g[0],h[0],f[1],y,'s','t'); h[1]:=s;g[1]:=t;

$>$ for i from 2 to n do

$>$ f[i]:=coeff(poly,x,i);l[i]:=f[i]-sum(g[j]*h[i-j],'j'=1..i-1);

$>$ gcdex(g[0],h[0],l[i],y,'s','t');

$>$ h[i]:=s;g[i]:=t;

$>$ end do;

$>$ H:=sum(h[j]*x\symbol{94}(j),'j'=0..n);

$>$ G:=sum(g[j]*x\symbol{94}(j),'j'=0..n);

$>$ L:=[mochar(G,n), mochar(H,n)]; RETURN(L[1],L[2]);

$>$ end proc:\medskip

The routine \textit{henselgen} takes as entry a polynomial $f(x,y)$ which is
monic in $y$, a list $L$ of polynomials $f_{i}(y)$, all monic in $y$,
pairwise relatively prime and such that $f(0,y)=f_{1}(y)\cdots f_{r}(y)$,
and an integer $n>0$. This procedure returns polynomials $%
G_{1}(x,y),...,G_{r}(x,y)$ such that $f(x,y)=G_{1}(x,y)\cdots G_{r}(x,y)$
modulo $x^{n+1}$, and $G_{i}(0,y)=f_{i}(y)$. The Maple 12 code for this
routine is \medskip

$>$ henselgen:=proc(f,L,n)

$>$ local P,G,H,T,S,i; S:=convert(L,set); H:=[];

$>$ for i from 1 to nops(L) do

$>$ T:= convert(S minus \{L[i]\},list);

$>$ P[i]:=expand(product(T[k],k=1..nops(T)));

$>$ G[i]:=Hensel(expand(f),expand(L[i]),P[i],n)[1]; H:=[op(H),G[i]];

$>$ end do;

$>$ RETURN(H); end proc:\medskip

The routine \textit{orden} receives a polynomial \textit{``poly"} 
\begin{equation*}
f(x,y)=y^{d}+b_{1}(x)y^{d-1}+\cdots +b_{d}(x),
\end{equation*}
with each $b_{i}(x)$ being a Laurent polynomial in $x$. It computes $u=\min
\{u[i]:i=1,...,d\}$, with $u[i]$ being the order of $b_{i}(x)$ (i.e. the
degree of the least degree term in $b_{i}(x)$), and returns $[r,u[r]]$, with 
$r$ the smallest index $i$ such that $u[r]/r=\min\{u[i]/i\}$. Its code is
\medskip

$>$ orden:=proc(poly)

$>$ local u,i,b,r,m,d;

$>$ d:=degree(poly, y);

$>$ b[1]:=coeff(poly,y,d-1); u[1]:=ldegree(b[1]); r:=1; m:=u[1]/r;

$>$ for i from 2 to d do

$>$ b[i]:=coeff(poly,y,d-i); u[i]:=ldegree(b[i]);

$>$ if (u[i]/i $<$ m ) then

$>$ m:= u[i]/i; r:=i;

$>$ end if;

$>$ end do; RETURN([r,u[r]]);

$>$ end proc:\medskip

The routine \textit{sus} receives a polynomial \textit{``poly"}, 
\begin{equation*}
poly(x,y)=y^{d}+b_{1}(x)y^{d-1}+\cdots +b_{d}(x),
\end{equation*}
whose coefficients are Laurent series in $x$ and computes:

\begin{enumerate}
\item A polynomial $g(x,y)$ in $\mathbb{C}((x))^{\ast }[y]$ satisfying $\psi
\phi (f(x,y))=x^{du[r]}g(x,y)$, (for notation see \ref{Teoremilla}) where $f$
denotes the polynomial that is obtained from \textit{poly} after performing
the substitution $y=y-b_{1}(x)/d,$ i.e. 
\begin{equation*}
f(x,y)=poly(x,y-b_{1}(x)/d)=y^{d}+c_{2}(x)y^{d-2}+\cdots +c_{d}(x).
\end{equation*}%
Then $r$ is equal to the value of the index $i$ where the minimum value of $%
\{u[i]/i:u[i]=$ degree of $c_{i}(x)\}$, is attained, and $u[r]$ is equal to
the order of $c_{r}(x)$. where 
\begin{equation*}
\psi \phi :\mathbb{C}((x))^{\ast }[y]\rightarrow \mathbb{C}((x))^{\ast }[y],
\end{equation*}%
denotes the automorphism obtained by composing the map $\phi
:f(x,y)\longmapsto $ $f(x,y-b_{1}(x)/d)$, with $\psi :f(x,y)\longmapsto
f(x^{r},yx^{u[r]}).$

\item \textit{sus} returns the triple $[g,r,u[r]]$, with $%
g(x,y)=x^{-du[r]}\psi \phi (f/x,y)$.\medskip
\end{enumerate}

\textit{Warning:}

1. If $u[r]=\infty $ ( which happens in case $f(x,y)=y^{d}$, or equivalently
if $poly(x,y)=(y-b_{1}(x))^{d}$), then \textit{sus} returns the triple $%
[(y-b_{1}(x))^{d},1,\infty ]$.

2. Notice that $r$ and $u[r]$ correspond to the polynomial $f(x,y)$ obtained
from \textit{``poly"} after performing the linear substitution that
eliminates the term of degree ${d-1}$ in $y$, but not to the polynomial 
\textit{``poly"} itself.\medskip

The code for the routine \emph{sus} is \medskip

$>$ sus:=proc(poly)

$>$ local g,q,f,b,d,u,r;

$>$ d:=degree(poly,y);

$>$ b[1]:= coeff(poly, y, d-1);

$>$ f:=collect(simplify(subs( y=y-b[1]/d, poly)),y);

$>$ r:=orden(f)[1]; u[r]:=orden(f)[2]; if u[r]=infinity then
RETURN([factor(poly),r,u[r]]);end if;

$>$ q:=collect( expand(x\symbol{94}(-d*u[r])*subs( \{x=x\symbol{94}r, y=y*x%
\symbol{94}u[r]\}, f )), y); RETURN([q,r,u[r]]);

$>$ end proc:\medskip

The routine \textit{invsus} takes as input an integer $d$, a polynomial $%
b=b(x)$, integers $r,u$ and a polynomial $g(x,y)=y^{d}+c_{1}(x)y^{d-1}+%
\cdots +c_{d}(x)$. If $\psi ^{-1}$ is the automorphism determined by $%
x\longmapsto x^{1/r}$, $y\longmapsto yx^{-u/r}$, and $\phi $ is the
isomorphism defined as the linear substitution $y\longmapsto y+b(x)/d$, then 
\textit{\ invsus} computes $x^{du/r}\phi ^{-1}\psi ^{-1}(g(x,y))$.\medskip

\textit{Observation:}\medskip

If $d=$grado($f(x,y)$), $b=b[1]$ the coefficient of $y^{d-1}$ in $f(x,y)$, $%
r $ the smallest index $i$ such that $u[r]/r=\min\{u[i]/i\}$, where each $%
u[i]$ is the degree of the coefficient of $y^{d-i}$ in the polynomial $%
f(x,y-b/d)$ and $u=u[r]$ the order of its $r^{th}$ coefficient $c_{r}(x)$,
(quantities associated to $f(x,y)$ in the procedure \emph{sus}, and $g(x,y)$
is a polynomial monic and of degree $d$ in $y$, resulting from the previous
procedure, i.e. such that $g=$\textit{sus}$(f(x,y))[1]$, and therefore 
\begin{equation*}
g(x,y)=x^{-du[r]}\psi \phi (f/x,y)).
\end{equation*}%
Then \textit{invsus} returns as a result $f(x,y)$, because 
\begin{eqnarray*}
x^{du[r]/r}\phi ^{-1}\psi ^{-1}(x^{-du[r]}\psi \phi (f/x,y)) &=& \\
x^{du[r]/r}x^{-du[r]/r}\phi ^{-1}\psi ^{-1}(\psi \phi (f/x,y)) &=&f(x,y).
\end{eqnarray*}

Here is the code for \emph{sus}.

$>$ invsus:=proc(d,b,r,u,g)

$>$ local D,t;D:=degree(g,y);

$>$ t:=root(x,r,symbolic);

$>$ simplify(t\symbol{94}(D*u)*subs(\{x=t, y=y*t\symbol{94}%
(-u)\},g),symbolic);

$>$ collect(simplify(subs(y=y+b/d,\%)),y);

$>$ RETURN(\%);

$>$ end proc:\medskip

The routine \textit{reduccion} receives a polynomial $f(x,y)$ which is monic
in $y$, and an integer $n>0$. If $f$ is linear or has the form $%
(y-b_{1}(x))^{d}$, the algorithm returns the triple $[1,1,f(x,y)]$.
Otherwise, it sets $g(x,y)$ $=$\textit{sus}$(f)$ (hence $\psi \phi
(f/x,y)=x^{du[r]}g(x,y)$) and tries to verify whether $g(0,y)$ has at least
one real root. If this is not the case, the algorithm returns $[1,1,f(x,y)]$%
. Otherwise, it factors $g(0,y)$ in terms of the form $g(0,y)$ $%
=h_{1}(y)\cdots h_{m}(y)$, where each $h_{i}(y)$ has the form $%
(y-r_{i})^{d_{i}}$, with $r_{i}$ real or of the form $h_{i}(y)=[(y-z)(y-%
\overline{z})]^{d_{i}}$, with $z$ nonreal, lifts the factorization using 
\textit{henselgen} modulo $x^{n+1}$, and applies \textit{invsus} in order to
obtain an integer $r>0$ such that $f(x^{r},y)=q_{1}(x,y)\cdots q_{m}(x,y)$.
The procedure returns the list $[r,[q_{1}(x,y),...,q_{m}(x,y)]]$. Here is
the code for this routine. \medskip

$>$ reduccion:=proc(f,n)

$>$ local i,L,S,d,h,q,r,b,g,ra,H,u;

$>$ d:=degree(f,y); if d=1 then RETURN([1,[1,f]]); end if;
b[1]:=coeff(f,y,d-1);r:=sus(f)[2];u:=sus(f)[3];if u=infinity then
RETURN([1,[1,factor(f)]]); end if;

$>$ g:=unapply(sus(f)[1],x,y);

$>$ if [fsolve(g(0,y),y)]=[] then RETURN([1,[1,f]]); end if;

$>$ S:=[solve(g(0,y),y)];

$>$ L:=poli(S);

$>$ H:=henselgen(g(x,y),L,n);

$>$ for i from 1 to nops(H) do

$>$ h[i]:=unapply(invsus(d,b[1],r,u,H[i]),x,y);

$>$ q[i]:=mochar(simplify(h[i](x\symbol{94}r,y),symbolic),n);

$>$ end do;

$>$ RETURN([r,[seq(q[i],i=1..nops(H))]]);

$>$ end proc:\medskip

The routine \textit{fiscal} checks the list $L$ and returns the empty list $%
[ \ ]$, if and only if all elements in $L$ are powers of linear polynomials
(i.e. each $L_{i}(x,y)$ in the list $L$ has the form $(y-b_{1}(x))^{d},d\geq
1$) or each $L_{i}(x,y)$ satisfies that \textit{sus}$(L_{i}(x,y))$ has only
nonreal roots. Otherwise, it returns the same list $L$. Here is the code for
this routine. \medskip

$>$ fiscal:=proc(L)

$>$ local ra,g,i;

$>$ for i from 1 to nops(L) do

$>$ g:=unapply(sus(L[i])[1],x,y);

$>$ ra:=[fsolve(g(0,y),y)];

$>$ if (degree(L[i],y)$>$1 and ra$<>$[] and sus(L[i])[3]$<>$infinity ) then
RETURN(L);

$>$ end if; end do; RETURN([]);

$>$ end proc:\medskip

The routine \textit{cambio} receives a list $%
L=[r,[r_{1},r_{2},...r_{i-1},r_{i},r_{i+1},...,r_{n}]]$, two integers $i$, $%
m $, and another integer $b$. The procedure returns a new list $%
L=[br,[br_{1},br_{2},...br_{i-1},r_{i},r_{i},...,r_{i},br_{i+1},...,br_{n}]$
(from the $i^{th}$ position on it puts $r_{i}$ repeated $m$ times). The code
for \emph{cambio} is \medskip

$>$ cambio:=proc(L,i,b,m)

$>$ local n,S,h; n:=nops(L[2]);

$>$ if i=1 then S:=[ L[1]*b, [seq(L[2][1],j=1..m),seq(b*L[2][j],j=i+1..n) ]
]; end if;

$>$ if i =n then S:=[ L[1]*b,[ seq(b*L[2][j],j=1..n-1),seq(L[2][n],j=1..m)]
]; end if;

$>$ S:=[
L[1]*b,[seq(b*L[2][j],j=1..i-1),seq(L[2][i],j=1..m),seq(b*L[2][j],j=i+1..n)]];

$>$ S; end proc:\medskip

The routine \textit{factoriza} takes a polynomial $f(x,y)$ that is monic in $%
y$ and an integer $n$. The algorithm produces a list $L=[L1,L2]$ with
entries having the form $L1=[f_{1}(x,y),...,f_{n}(x,y)]$ and $%
L2=[r,[r_{1},...,r_{n}]]$ such that $r=r_{1}\cdots r_{n}$ and 
\begin{equation*}
f(x^{r},y)=f_{1}(x^{r_{1}},y)\cdots f_{n}(x^{r_{n}},y),
\end{equation*}%
such that each $f_{i}(x,y)$ admits no real reduction, i.e. $f_{i}(x,y)$ is
either of the form $(y-b_{1}(x))^{d}$, with $d\geq 1$, or if $g_{i}(x,y)=$%
\textit{sus}$(f_{i}(x,y))$, then $g_{i}(0,y)$ does not admit any real root.
In other words, the only real Puiseux series in the factorization of $f(x,y)$
are the ones given by $f_{1}(x^{r_{1}/r},y),...,f_{n}(x^{r_{n}/r},y)$, and
therefore if 
\begin{equation*}
f(x,y)=(y-\sigma _{1}(x^{1/a_{1}}))^{d_{1}}\cdots (y-\sigma
_{n}(x^{1/a_{n}}))^{d_{n}}(y-\alpha _{1}(x^{1/b_{1}}))^{e_{1}}\cdots
(y-\alpha _{m}(x^{1/b_{m}}))^{e_{m}}
\end{equation*}%
is the complete factorization of $f(x,y)$ in $\mathbb{C}((x))^{\ast }[y]$,
with $\sigma _{i}(t)=\sum_{k}p_{i_{k}}t^{k}$, a series with real coeficients 
$p_{i_{k}}$, and $\alpha _{j}(t)=\sum_{k}c_{j_{k}}t^{k}$, series having at
least one nonreal coefficient, then $f_{i}(x,y)=(y-\sigma
_{i}(x^{1/a_{i}}))^{d_{i}}$ modulo $x^{n+1}$, y $a_{i}=r_{i}/r$. The code
for \emph{factoriza} is \medskip

$>$ factoriza:=proc(f,n)

$>$ local FF, RR, i, redu;

$>$ FF:=[f];RR:=[1,[1]];

$>$ while (fiscal(FF)$<>$[]) do

$>$ for i from 1 to nops(FF) do

$>$ if reduccion(FF[i],n)[2][1]$<>$1 then

$>$ redu:=reduccion(FF[i],n);RR:=cambio(RR,i,redu[1],nops(redu[2]));

$>$ if i $<$ nops(FF) then
FF:=[op(FF[1..i-1]),op(redu[2]),op(FF[i+1..nops(FF)])];

$>$ else FF:=[op(FF[1..i-1]),op(redu[2])]; end if;

$>$ else end if;end do;

$>$ end do;

$>$ RETURN([factor(FF),RR]);

$>$ end proc:\medskip

The routine \textit{rotacion} takes a polynomial $f(x,y)$ and looks for an
integer $n>0$ such that the substitution $x=x+ny$, $y=-nx+y$ makes it
quasi-monic. Afterwards it divides it by a nonzero constant making it monic.
The algorithm produces a $[g/c,n]$, where $g$ is obtained from $f$ via the
substitution, $c$ is the coefficient of the highest power of $y$ in the
polynomial $g$, and $n$ is the integer that makes this substitution work. If 
$f(x,y)$ is monic, the algorithm takes $n=0$ and hence it returns $f(x,y)$
again. The code for this routine is\medskip

$>$ rotacion:= proc(f)

$>$ local c,d,g,n; for n from 0 to infinity do

$>$ subs(\{x=x+n*y, y=-n*x+y\},f);

$>$ g:=collect(simplify(\%),y); d:=degree(g,y);

$>$ c:=coeff(g,y,d);

$>$ if degree(c,x)=0 then RETURN([g/c,n]); end if;

$>$ end do; end proc:\medskip

The routine \textit{revisor} takes as entry a polynomial $f(x,y)$ that is
monic in $y$. If $f(x,y)=g(x,y)^{n}$, then \textit{revisor} return a
polynomial $g(x,y)$, if $g(x,y)$ is linear in $y$, i.e. of the form $y-a(x)$%
. Otherwise, it returns the empty set. The code for this routine is \medskip

$>$ revisor:=proc(f)

$>$ local l;

$>$ l:= factors(f)[2];

$>$ if degree(l[1][1],y)=1 then RETURN(monico(l[1][1])); end if;

$>$ RETURN(); end proc:\medskip

\textit{limite} is the main routine and it is built out of the previous
ones. It receives as entry polynomials $f(x,y),g(x,y)$ (not necessarily
monic) and an integer $n>0$ . It computes the quotient $q=f/g$. Then it
computes 
\begin{equation*}
h=y(g(\partial f/\partial x)-f(\partial g/\partial x))-x(g(\partial
f/\partial y)-f(\partial g/\partial y)).
\end{equation*}%
Then it performs an appropriate rotation $h1=$\textit{\ rotacion}$(h)$, so
that $h1$ is monic in $y$, and factors $h1$ into irreducible polynomials $%
h1=b1\cdots bs$. Next if applies \textit{factoriza}, to an approximation of $%
n>0$, and returns for each factor $bi$, a list $L$ formed by two lists 
\begin{equation*}
L1=[[p_{1}(x,y),...,p_{n}(x,y)]\text{ y }L2=[r,[r_{1},...,r_{n}]]
\end{equation*}%
such that $r=r_{1}\cdots r_{n}$ y $bi(x^{r},y)=p_{1}(x^{r_{1}},y)\cdots
p_{n}(x^{r_{n}},y)$.\newline

After this, it computes the list $%
S=[p_{1}(x^{r_{1}},y),...,p_{n}(x^{r_{n}},y)]$. Each $%
p_{i}(x^{r_{i}},y)=(y-a_{i}(x^{r_{i}}))^{d_{i}}$, (which would correspond to
a real Puiseux series) or having the form $p_{i}=(u(x,y))^{d_{i}}$, where $%
u(x,y)$ is monic in $y$, and of degree larger than one (which would
correspond to a nonreal Puiseux series). Observe that $%
bi(x,y)=p_{1}(x^{r_{1}/r},y) \cdots p_{n}(x^{r_{n}/r},y)$.\newline

After this, \textit{revisor} is applied to $S$ and returns a list 
\begin{equation*}
H=[q_{i1}(x,y),q_{i2}(x,y),...,q_{ik}(x,y)],
\end{equation*}
where th $q_{ij}(x,y)=(y-a_{ij}(x^{r_{ij}}))$ are exactly those elements of $%
S$ , without the power $d_{ij}$, that correspond to real Puiseux series.
That is 
\begin{equation*}
q_{i1}(x,y)^{d_{i1}}=p_{i1}(x^{ri1},y),...,q_{ik}(x,y)^{d_{ik}}=p_{ik}(x^{r_{ik}},y).
\end{equation*}
In this way $%
q_{i1}(x,y)=y-a_{i1}(x^{r_{i1}}),...,q_{ik}(x,y)=y-a_{ik}(x^{r_{ik}})$.%
\newline

Then it constructs $f_{1}$ and $g_{1}$, the polynomial obtained by applying
the substitution $x=x+ny$, $y=-nx+y$ to $f$ and $g$ (using the same $n$
found for the rotation of $h$). After this it makes the list $%
T=[a_{i1}(x^{r_{i1}}),...,a_{ik}(x^{r_{ik}})]$. Then it computes $%
R=[a_{i1}(x^{r_{i1}/r}),...,a_{ik}(x^{r_{ik}/r})]$, and then it computes the
list $P$ of pairs

\begin{equation*}
P=[[f_{1}(x,a_{i1}(x^{r_{i1}/r})),g_{1}(x,a_{i1}(x^{r_{i1}/r}))],...,[f_{1}(x,a_{ik}(x^{r_{ik}/r})),g_{1}(x,a_{ik}(x^{r_{ik}/r}))]]
\end{equation*}%
excluding those trajectories not passing through the origin ( $%
a_{ik}(0^{r_{ij}/r})\neq 0$ ) and therefore the list of limits

\begin{equation*}
Q[i]=[\underset{x\rightarrow 0}{\lim }%
(f_{1}(x,a_{i1}(x^{r_{i1}/r}))/g_{1}(x,a_{i1}(x^{r_{i1}/r}))),...,\underset{%
x\rightarrow 0}{\lim }%
(f_{1}(x,a_{ik}(x^{r_{ik}/r}))/g_{1}(x,a_{ik}(x^{r_{ik}/r})))],
\end{equation*}%
for $i=1,...,s$.

The limit exists if all the values of the list $Q[i]$, for all $b_{i},$ $%
i=1,...,s$, are equal.\medskip

$>$ limite:=proc(f,g,n)

$>$ local F,G,a,b,Q,e,P,k,T,j,N,f1,g1,i,S,H,L,h1,h,q,R,t;

$>$ q:=f/g;

$>$ h:=
simplify(expand(y*(g*diff(f,x)-f*diff(g,x))-x*(g*diff(f,y)-f*diff(g,y))));
if h=0 then RETURN(q) end if;

$>$ h1:=rotacion(h);

$>$ F:=factors(h1[1])[2]; G:=[]; for b from 1 to nops(F) do
G:=[op(G),monico(F[b][1])]; end do;

$>$ for a from 1 to nops(G) do Q[a]:=[];

$>$ L:=convert(evalf(factoriza(G[a],n)),rational); S:=[];
t:=root(x,L[2][1],symbolic);

$>$ for i from 1 to nops(L[1]) do

$>$ S:=expand([op(S),subs(x=x\symbol{94}L[2][2][i],L[1][i])]); end do;

$>$ H:=map(revisor,S); N:=h1[2];

$>$ subs(\{x=x+N*y, y=-N*x+y\},f);f1:=collect(\%,y);subs(\{x=x+N*y,
y=-N*x+y\},g);

$>$ g1:=collect(\%,y); T:=[];

$>$ for j from 1 to nops(H) do

$>$ T:=[op(T),-coeff(H[j],y,0)/coeff(H[j],y,1)]; end do;
R:=subs(x=t,T);P:=[];

$>$ for k from 1 to nops(R) do if subs(x=0,R[k])=0 then

$>$ P:=[op(P),subs(\{y=R[k]\},[f1,g1])]; end if; end do;

$>$ for e from 1 to nops(P) do

$>$ Q[a]:=[op(Q[a]),limit(evalf( P[e][1]/P[e][2] ),x=0)]; end do;

$>$ end do; [seq(op(Q[a]),a=1..nops(F))];

$>$ end proc:\medskip

\section{Calculation}

Next, we give some examples illustrating the performance of the
algorithm:\medskip

\begin{enumerate}
\item limite(6*x\symbol{94}3*y,2*x\symbol{94}4+y\symbol{94}4,20);
[2.033104508, -2.033104508, 0.]. Consequently, $%
\lim_{(x,y)->(0,0)}6x^{3}y/(2x^{4}+y^{4})$ does not exist.

\item limite((x\symbol{94}3+y\symbol{94}3),(x\symbol{94}2+x*y+y\symbol{94}%
2),20); [0., 0., 0.]. In this case $\lim_{(x,y)\rightarrow
(0,0)}(x^{3}+y^{3})/(x^{2}+xy+y^{2})$ exists and equals $0.$

\item limite(6*x\symbol{94}3*y,2*x\symbol{94}4+y\symbol{94}4,20);
[2.033104508, -2.033104508, 0.]. The limit $%
\lim_{(x,y)->(0,0)}6x^{3}y/(2x^{4}+y^{4})$ does not exist.

In the following the limit exists only in 4. and 5.

\item limite((x\symbol{94}4-y\symbol{94}2+3*x\symbol{94}2*y-x\symbol{94}2),(x%
\symbol{94}2+y\symbol{94}2),20); [-1., -1., -1.].

\item limite( (x\symbol{94}2-y\symbol{94}2), (x\symbol{94}2+y\symbol{94}%
2),10); [1., -1.].

\item limite((x\symbol{94}6-y\symbol{94}4+3*x\symbol{94}2*y\symbol{94}3-x%
\symbol{94}4*y),(x\symbol{94}4+y\symbol{94}4+x\symbol{94}2+y\symbol{94}%
2),20); [0., 0., 0., 0.]

\item limite(x,x\symbol{94}2+y\symbol{94}2,30); [undefined]

\item limite(y\symbol{94}4,x\symbol{94}4+3*y\symbol{94}4,50); [0.3333333333,
0.]

\item limite(6*x\symbol{94}3*y,2*x\symbol{94}4+y\symbol{94}4,10); [0.,
2.033104508, -2.033104508].

\item limite(x\symbol{94}4*y\symbol{94}4, (x\symbol{94}8+y\symbol{94}8)%
\symbol{94}3, 20); [0., 0., Float(infinity), Float(infinity)].
\end{enumerate}

\section{Discussion and Conclusions}

The algorithm developed in this article provides a method for computing
limits of quotients of real polynomials in two variables which has proven to
be more powerful in handling these type of limits than other existing
algorithms. The following examples compare the performance of the routine\
here presented and that of \textit{Maple 12.\medskip }

\begin{enumerate}
\item Example number 5 in the previous section shows that $%
\lim_{(x,y)\rightarrow (0,0)}(x^{4}-y^{2}+3x^{2}y-x^{2})/(x^{2}+y^{2})$ is
equal to $-1.$ However, \textit{Maple 12} is uncapable to compute it.

\item Example 8 shows that $\lim_{(x,y)\rightarrow (0,0)}y^{4}/(x^{4}+3y^{4})
$ does not exist, an example that \textit{Maple 12} computes successfully.

\item $\lim_{(x,y)\rightarrow
(0,0)}(x^{6}-y^{4}+3x^{2}y-x^{4}y)/(x^{4}+y^{4}+x^{2}+y^{2})$ is equal to
zero, a problem that defeats Maples%
\'{}
routine.
\end{enumerate}

The theoretical method for computing limits developed in this article
applies to quotients of two real analytic functions. However, the algorithm
was only implemented for polynomials. The next logical step would be to
extend this algorithm to cover this general case. 

In a sequel article we will develop a more general method dealing with
limits of quotiens of real analytic functions in several variables.

\section{Acknowledgements}

We are grateful to the Universidad Nacional of Colombia and Universidad
Eafit, for their invaluable support.

\end{document}